\theoremstyle{plain}      
\newtheorem{theorem}{Theorem}[section]      
\newtheorem{lemma}{Lemma}[section]      
\newtheorem{corollary}[theorem]{Corollary}      
\newtheorem{proposition}{Proposition}[section]
\newtheorem{definition}{Definition}[section]          
\theoremstyle{remark}      
\newtheorem{remark}{Remark}[section]
\newcommand{\Z}{{\mathbb{Z}}}   
\newcommand{\C}{{\mathbb{C}}}
\begin{document}

\date{\today}

\title{Free subgroups within the images of quantum representations}         
\author{      
\begin{tabular}{cc}      
 Louis Funar &  Toshitake Kohno\\      
\small \em Institut Fourier BP 74, UMR 5582       
&\small \em IPMU, Graduate School of Mathematical Sciences\\      
\small \em University of Grenoble I &\small \em The University of Tokyo    \\      
\small \em 38402 Saint-Martin-d'H\`eres cedex, France      
&\small \em 3-8-1 Komaba, Meguro-Ku, Tokyo 153-8914 Japan \\      
\small \em e-mail: {\tt funar@fourier.ujf-grenoble.fr}      
& \small \em e-mail: {\tt kohno@ms.u-tokyo.ac.jp} \\      
\end{tabular}      
}

\maketitle 

\begin{abstract}
We prove that, except for a few explicit 
roots of unity, the quantum image of any Johnson subgroup of the 
mapping class group contains an  explicit free non-abelian subgroup.

\vspace{0.1cm}
\noindent 2000 MSC Classification: 57 M 07, 20 F 36, 20 F 38, 57 N 05.  
 
\noindent Keywords:  Mapping class group, Dehn twist,  
triangle group, braid group, Burau representation, Johnson filtration, 
quantum representation.

\end{abstract}

\section{Introduction and statements}

\vspace{0.2cm}\noindent 
The aim of this paper is 
to study  the images of the mapping class groups   
by quantum representations.
Some results in this direction are already known. 
We refer the reader to \cite{RT} and \cite{Kohno}
for earlier treatments of quantum representations.
In \cite{F} we proved that the images are infinite and non-abelian (for all but finitely 
many explicit cases) using earlier results of Jones who proved in 
\cite{Jones} that the same holds true  
for the braid group representations factorizing through the 
Temperley-Lieb algebra at roots of unity. Masbaum then found in \cite{Mas2} 
explicit elements of infinite order in the image.   
General arguments concerning Lie groups actually show 
that the image should contain a free 
non-abelian group. Furthermore, Larsen and Wang showed (see \cite{LW}) 
that the image  of the quantum representations of the mapping class groups at 
roots of unity of the form 
$\exp\left(\frac{2\pi i}{4r}\right)$, for prime $r\geq 5$,  
is dense in the projective unitary group. 

\vspace{0.2cm}\noindent 
In order to be precise we have to specify the quantum 
representations we are considering. Recall that in \cite{BHMV} 
the authors defined the TQFT functor $\mathcal V_{p}$, for every $p\geq 3$  
and a primitive root of unity $A$ of order $2p$.
These TQFT should correspond to the so-called 
$SU(2)$-TQFT, for even $p$ and to 
the $SO(3)$-TQFT, for odd $p$ (see also \cite{LW} for another 
$SO(3)$-TQFT). 

\begin{definition}\label{qrep}
Let $p\in\Z_+$, $p\geq 3$, such that $p\not\equiv 2({\rm mod}\: 4)$. 
The  quantum representation $\rho_p$ 
is the projective representation of  the mapping class group 
associated to the TQFT $\mathcal V_{\frac{p}{2}}$ for even $p$
and $\mathcal V_{p}$ for odd $p$,   
corresponding to the following choices of the root of unity: 
\[ A_p=\left\{\begin{array}{ll}
-\exp\left(\frac{2\pi i}{p}\right), & {\rm if}\: p\equiv 0({\rm mod}\: 4);\\
-\exp\left(\frac{(p+1)\pi i}{p}\right) , & {\rm if}\: p\equiv 1({\rm mod}\: 2).\\
\end{array}\right. \]
Notice that $A_p$ is a primitive root of unity of order $p$ when $p$ is even 
and of order $2p$ otherwise.  
\end{definition}

\begin{remark}\label{order}
The eigenvalues of a Dehn twist in the TQFT $\mathcal V_p$ i.e.,   
the entries of the diagonal $T$-matrix  
are of the form $\mu_l=(-A_p)^{l(l+2)}$, where $l$ belongs to the 
set of admissible colors (see \cite{BHMV}, 4.11). 
The set of admissible colors 
is $\{0,1,2,\ldots,\frac{p}{2}-2\}$, for even $p$ and is 
$\{0,2,4,\ldots, p-3\}$ for odd $p$.  
Therefore the order of the image of a Dehn twist by $\rho_p$ 
is $p$. 
\end{remark}

\vspace{0.2cm}\noindent 
We will now consider the Johnson filtration by the 
subgroups $I_g(k)$ of the mapping class group $M_g$ of the closed orientable 
surface of genus $g$, consisting of those 
elements having a trivial outer action on the 
$k$-th nilpotent quotient of the fundamental 
group of the surface, for  some $k\in\Z_+$. 
As is well-known the Johnson filtration 
shows up within the framework of finite type invariants of 3-manifolds 
(see e.g. \cite{GL}). 

\vspace{0.2cm}\noindent 
Our next result shows that the image is large 
in the  following sense (see also Propositions \ref{Johnsonfree} and 
\ref{excases}):

\begin{theorem}\label{Johnson}
Assume that $g\geq 3$ and $p\not\in\{3, 4, 8, 12, 16, 24\}$ 
or $g=2$, $p$ is even and $p\not\in\{4, 8, 12, 16, 24, 40\}$. 
Then for any $k$, the image $\rho_p(I_g(k))$ of the $k$-th 
Johnson subgroup by the quantum 
representation $\rho_p$ contains a free non-abelian group.   
\end{theorem}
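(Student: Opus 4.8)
\emph{Reduction to the Torelli group.} The plan is first to eliminate the parameter $k$. The Johnson filtration is a central filtration, $[I_g(i),I_g(j)]\subseteq I_g(i+j)$, and $I_g(1)$ is the Torelli group $\mathcal I_g$; hence an immediate induction shows that the lower central series of $\mathcal I_g$ satisfies $\Gamma_k(\mathcal I_g)\subseteq I_g(k)$ for every $k\geq 1$. Since $\rho_p$ is a homomorphism, $\rho_p(\Gamma_k(\mathcal I_g))=\Gamma_k(\rho_p(\mathcal I_g))$, and the lower central series is monotone with respect to subgroups; therefore, if $\rho_p(\mathcal I_g)$ contains a free non-abelian subgroup $F$, then $\Gamma_k(F)\subseteq\Gamma_k(\rho_p(\mathcal I_g))=\rho_p(\Gamma_k(\mathcal I_g))\subseteq\rho_p(I_g(k))$. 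A nontrivial subgroup of a free group is free, and $\Gamma_k(F)$ is non-abelian for every $k$ (a free non-abelian group is residually nilpotent and has no nontrivial normal cyclic subgroup), so $\Gamma_k(F)$ is itself free non-abelian, and a fortiori it contains a free subgroup of rank two which can be written down explicitly. Thus everything reduces to producing an explicit free non-abelian subgroup of $\rho_p(\mathcal I_g)$, which will be the content of Proposition \ref{Johnsonfree}, the exceptional values of $p$ being isolated in Proposition \ref{excases}.

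\emph{The base case.} To land genuinely inside the Torelli group one cannot use the copy of $B_3$ generated by two Dehn twists along curves meeting once, as in \cite{F}, because its intersection with $\mathcal I_g$ is the (cyclic) centre. Instead I would use twists along \emph{separating} curves, which lie in $I_g(2)\subseteq\mathcal I_g$: choose an embedded subsurface $\Sigma'\subseteq\Sigma_g$ of genus at least two (possible since $g\geq 3$, and also when $g=2$ by taking $\Sigma'=\Sigma_g$), and inside $\Sigma'$ two separating simple closed curves $c_1,c_2$ realizing the least possible geometric intersection number and filling $\Sigma'$. The twists $T_{c_1},T_{c_2}$ are separating twists, hence elements of $I_g(2)$, and they generate a subgroup $H\subseteq\mathcal I_g$ on which $\rho_p$ restricts to a representation that, through $M(\Sigma')\hookrightarrow M_g$ and the skein-theoretic description of $\mathcal V_p$, is given explicitly in a basis adapted to a pants decomposition by two matrices of order $p$ (one diagonal, the other a fusion-matrix conjugate of a diagonal one), and which on the relevant subquotient coincides with a Temperley--Lieb / Burau representation of a braid group acting on $\Sigma'$. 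Invoking the analysis of these braid-group representations at roots of unity (Jones \cite{Jones}, and \cite{F}), the projective image of $\rho_p(H)$ is, up to the finite central ambiguity, a triangle group whose type depends only on $p$; it is non-elementary --- indeed a cocompact or finite-covolume Fuchsian group --- precisely when $p$ avoids the list $\{3,4,8,12,16,24\}$, with the single extra value $40$ in genus two (there $\Sigma'=\Sigma_2$ is forced through the hyperelliptic quotient onto the six-holed sphere, which also accounts for the restriction to even $p$ in that case). A non-elementary Fuchsian group contains an explicit free non-abelian subgroup (by Selberg's lemma a torsion-free finite-index subgroup is free or a surface group, or one exhibits a ping-pong pair directly), and this survives the finite central extension $U(N)\to PU(N)$; this is the required free subgroup of $\rho_p(\mathcal I_g)$.

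\emph{Main obstacle.} The heart of the argument, and its only delicate point, is the base case: one must simultaneously (i) choose the subsurface and the separating curves so that the restriction of $\rho_p$ to $H$ is \emph{honestly} identified with a braid-type representation --- so that the free subgroup really lies in the image of the Torelli group, and not merely in $\rho_p(M_g)$ as in \cite{F} --- and (ii) determine exactly for which roots of unity the associated triangle group collapses to a spherical or Euclidean (hence virtually solvable, possibly finite) group; this bookkeeping produces the exceptional lists, and genus two has to be treated separately because of the hyperelliptic involution. The remaining steps --- the lower central series reduction, the passage from ``non-elementary Fuchsian'' to ``contains a free non-abelian subgroup'', and the lift across the projective ambiguity --- are routine.
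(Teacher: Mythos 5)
Your proposal follows the paper's core strategy: realize two separating Dehn twists as $g_1^2,g_2^2$ in a copy of $PB_3$ supported on an embedded holed sphere, identify the restriction of $\rho_p$ to a $2$-dimensional space of conformal blocks with Burau's representation at an explicit root of unity (Lemma \ref{contain}), and use the fact that the image is an infinite triangle group away from finitely many roots of unity (Corollary \ref{inftriang}) to extract a free non-abelian subgroup. Where you genuinely diverge is the treatment of $k$: the paper's main proof pushes $(PB_3)_{(k)}$ into $I_g(k)$ at the source via Artin longitudes and Milnor invariants (Proposition \ref{cseries}) and then needs Lemma \ref{nonsol} to show each term of the lower central series of the triangle group is still non-solvable and not $A_5$; you instead invoke the strong centrality $[I_g(i),I_g(j)]\subseteq I_g(i+j)$ of the Johnson filtration to get $\Gamma_k(I_g(1))\subseteq I_g(k)$ and then take $\Gamma_k$ of a free subgroup of $\rho_p(I_g(1))$. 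That reduction is legitimate (it is close in spirit to the paper's second proof in Section \ref{shorter}, which uses Morita's $[I_g(k),I_g(k)]\subseteq I_g(k+1)$ and the derived series) and it buys you a shorter argument at the cost of citing Andreadakis--Morita centrality and of losing the explicit generators of Theorems \ref{free} and \ref{free2}. Two small slips: non-triviality of $\Gamma_k(F)$ follows from $F$ being non-nilpotent, not from residual nilpotence; and the $2$-dimensional invariant block is pinned down by the specific configuration (boundary labels $(1,1,1,1)$, $(2,2,2,2)$ or $(4,2,2,2)$), not merely by choosing separating curves of minimal intersection.

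There is, however, one concrete gap. The triangle-group mechanism fails not only for $p\in\{3,4,8,12,16,24\}$ but also for $p=5$ and $p=40$: there the root $q_p$ has order $10$ (resp.\ $5$), so $\Gamma_{q_p}$ is the finite icosahedral group $A_5$ and contains no free subgroup at all. Yet the theorem asserts the conclusion for $p=5$ and $p=40$ when $g\geq 3$. Your sketch attributes the exceptional value $40$ to the hyperelliptic involution in genus two and never mentions $5$, so these cases are not covered by your argument. The paper handles them by a genuinely different tool: it embeds $PB_4$ via a five-holed sphere, restricts $\rho_p$ to a $3$-dimensional block of conformal blocks carrying the Jones representation of $B_4$, and uses the Freedman--Larsen--Wang/Kuperberg Zariski density of that representation in $SL(3,\C)$ together with the Tits alternative (Proposition \ref{excases}). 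Some such additional input is unavoidable for $p\in\{5,40\}$, so your proof as written establishes the theorem only for $p$ outside $\{3,4,5,8,12,16,24,40\}$.
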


\vspace{0.2cm}\noindent 
The idea of proof  for this theorem 
is to embed a pure braid group within the mapping class 
group and to show that its image is large. Namely,  
a 4-holed sphere suitably embedded in the surface leads to 
an embedding of the pure braid group $PB_3$ in the mapping class 
group.  The quantum representation contains a particular  
sub-representation which is the restriction of 
Burau's representation (see \cite{F}) to a free subgroup of 
$PB_3$. 
One way to obtain elements of the Johnson filtration is to consider 
elements of the lower central series of $PB_3$ and extend them 
to all of the surface by identity. Therefore it suffices to find  
free non-abelian subgroups  in the image of the 
lower central series of $PB_3$ by Burau's representation at 
roots of unity in order to prove Theorem \ref{Johnson}. 

\vspace{0.2cm}\noindent
The analysis of the contribution of mapping classes supported on  
small sub-surfaces of a surface, which are usually holed spheres,  
to various subgroups of the mapping class groups 
was also used in an unpublished paper by T.~Oda 
and J.~Levine (see \cite{Levine}) for obtaining lower bounds for 
the ranks of the graded quotients of the Johnson filtration. 

\vspace{0.2cm}\noindent
Our construction also provides explicit free non-abelian subgroups 
(see Theorems \ref{free} and \ref{free2} for precise statements).

\vspace{0.2cm}\noindent 
{\bf Acknowledgements.}  We are grateful to 
J{\o}rgen Andersen,  Greg Kuperberg, Greg McShane and Gregor Masbaum   
for useful discussions and to Ian Agol for pointing out a gap in the  
previous version of this paper.
The second author is partially supported by Grant-in-Aid for Scientific
Research 23340014, Japan Society for Promotion of Science, and by World 
Premier International Research Center Initiative, MEXT, Japan.  
A part of this work was accomplished while the second
author was staying at Institut Fourier in Grenoble. He would like to thank
Institut Fourier for hospitality.

\section{Burau's representations of  $B_3$  and triangle groups}\label{tri}
\vspace{0.2cm}\noindent
Let $B_n$ denote the braid group on $n$ strands with 
the standard generators $g_1,g_2,\ldots,g_{n-1}$. Squier was interested 
to compare the kernel of Burau's representation $\beta_{q}$ at a 
$k$-th root of unity $q$ with the normal subgroup $B_n[k]$ 
of $B_n$ generated by $g_j^{k}$, $1\leq j\leq n-1$. Recall that: 

\begin{definition}
The (reduced) Burau representation $\beta:B_n\to GL(n-1,\Z[q,q^{-1}])$ 
is defined on the standard generators 
\[ \beta_q(g_1)=\left(\begin{array}{cc}
-q & 1 \\
0  & 1 \\
\end{array}
\right) \oplus {\mathbf 1}_{n-3},\]
\[ \beta_q(g_j)={\mathbf 1}_{j-2}\oplus 
\left(\begin{array}{ccc}
1 & 0 & 0 \\
q & -q & 1 \\
0 & 0  & 1 \\
\end{array}
\right) \oplus {\mathbf 1}_{n-j-2}, \:\: {\rm for} \:\: 2\leq j\leq n-2,\]
\[ \beta_q(g_{n-1})={\mathbf 1}_{n-3}\oplus 
\left(\begin{array}{cc}
1 & 0 \\
q & -q \\
\end{array}
\right). 
\]
\end{definition}

\vspace{0.2cm}\noindent
The paper \cite{FK1} is devoted to the complete 
description of the image of Burau's representation of $B_3$ at roots of unity.
Similar results were obtained in \cite{Ku,Mas2,McM}. For the sake 
of completeness we review here the essential tools from 
\cite{FK1} to be used later.  

\vspace{0.2cm}\noindent
Let us denote by $A=\beta_{-q}(g_1^2)$ and $B=\beta_{-q}(g_2^2)$ and 
$C=\beta_{-q}((g_1g_2)^3)$. 
As is well-known $PB_3$ is isomorphic to the direct product 
${\mathbb F}_2\times \Z$, where ${\mathbb F}_2$ is freely generated by 
$g_1^2$ and $g_2^2$ and the factor $\Z$ is the center of $B_3$ 
generated by $(g_1g_2)^3$. 

\vspace{0.2cm}\noindent
It is simple to check that: 
\[ A=\left(\begin{array}{cc}
q^2 & 1+q \\
0  & 1 \\
\end{array}
\right), \,\, 
B=  
\left(\begin{array}{cc}
1 & 0 \\
-q-q^2 & q^2 \\
\end{array}
\right), \,\,
C=  
\left(\begin{array}{cc}
-q^3 & 0 \\
0 & -q^3 \\
\end{array}
\right).
\]

\vspace{0.2cm}\noindent
Recall that $PSL(2,\Z)$ is the quotient of $B_3$ by its center. 
Since $C$ is a scalar matrix  
the homomorphism $\beta_{-q}:B_3\to GL(2,\C)$ 
factors to a homomorphism  
$PSL(2,\Z)\to PGL(2,\C)$.

\vspace{0.2cm}\noindent
We will be concerned below with  the subgroup 
$\Gamma_{-q}$ of $PGL(2,\C)$ generated by  the images of 
$A$ and $B$ in  $PGL(2,\C)$.
When  $\beta_{-q}$ is unitarizable,  
the group $\Gamma_{-q}$ can be viewed as a subgroup of 
the complex-unitary  group $PU(1,1)$.

\vspace{0.2cm}\noindent
Before we proceed we make a short digression on triangle groups.
Let $\Delta$ be a geodesic triangle in the hyperbolic plane of angles 
$\frac{\pi}{m},\frac{\pi}{n},\frac{\pi}{p}$, so that  
$\frac{1}{m}+\frac{1}{n}+\frac{1}{p}<1$.  
The extended triangle group $\Delta^*(m,n,p)$ is the group of isometries 
of the hyperbolic plane generated by the three reflections 
$R_1,R_2,R_3$ with respect to the edges  of $\Delta$. It is well-known that 
a presentation of  $\Delta^*(m,n,p)$ is given by 
\[  \Delta^*(m,n,p)=\langle R_1,R_2,R_3\; ; \; 
R_1^2=R_2^2=R_3^2=1,\; (R_1R_2)^m=(R_2R_3)^n=(R_3R_1)^p=1\rangle. \]
The second type of relations have a simple geometric meaning. 
In fact, the product of the reflections with respect to two 
adjacent edges is a 
rotation by the angle which is twice the angle between those edges.
The subgroup $\Delta(m,n,p)$ 
generated by the rotations $a=R_1R_2$, $b=R_2R_3$, $c=R_3R_1$ 
is a normal subgroup of index 2, which coincides  
with the subgroup of isometries preserving the orientation. 
One calls $\Delta(m,n,p)$ the triangle (also called triangular, or 
von Dyck) group 
associated to $\Delta$.
Moreover, the triangle group has the 
presentation: 
\[ \Delta(m,n,p)=\langle a,b,c\; ; \; 
a^m=b^n=c^p=1, abc=1\rangle. \]
Observe that $\Delta(m,n,p)$ also makes sense  when $m,n$ or $p$ are 
negative integers, by interpreting the associated generators as 
clockwise rotations. 
The triangle $\Delta$ is a fundamental domain for the action 
of $\Delta^*(m,n,p)$ on the hyperbolic plane. Thus 
a fundamental domain for $\Delta(m,n,p)$ consists of the 
union $\Delta^*$ of $\Delta$ with the reflection of $\Delta$ in one of its 
edges.

\begin{proposition}[\cite{FK1}]\label{even}
Let $m<k$ be such that ${\rm gcd}(m,k)=1$ where $k\geq 4$. 
Then the group 
$\Gamma_{-\exp\left(\frac{\pm 2m\pi i}{2k}\right)}$ is a triangle 
group with  the presentation:  
\[ 
\Gamma_{-\exp\left(\frac{\pm 2m\pi i}{2k}\right)}=\langle A,B; A^k=B^k=(AB)^k=1\rangle. \]
\end{proposition}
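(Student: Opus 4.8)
\noindent\emph{Proof strategy.}

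\smallskip
\noindent\textbf{Step 1 (reduction to faithfulness).} Fix the $+$ sign; the two signs give complex-conjugate representations and triangle groups do not see orientation. Write $q=\exp\!\big(\tfrac{m\pi i}{k}\big)$, so the Burau parameter is $-q$ and $q^{2}=\exp\!\big(\tfrac{2m\pi i}{k}\big)$ is a \emph{primitive} $k$-th root of unity, since $\gcd(m,k)=1$. From the explicit matrices, $A$ has eigenvalues $q^{2},1$; $B$ has trace $1+q^{2}$ and determinant $q^{2}$, hence eigenvalues $q^{2},1$; and $AB$ has trace $-q(1+q^{2})$ and determinant $q^{4}$, hence eigenvalues $-q,-q^{3}$. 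In each case the ratio of the two eigenvalues is $q^{\pm2}$, a primitive $k$-th root of unity, so the images $\bar A,\bar B,\overline{AB}$ in $PGL(2,\C)$ are elliptic of order exactly $k$. Eliminating $c=(ab)^{-1}$ from the presentation $\Delta(k,k,k)=\langle a,b,c\mid a^{k}=b^{k}=c^{k}=1,\ abc=1\rangle$ identifies this group with $\langle a,b\mid a^{k}=b^{k}=(ab)^{k}=1\rangle$, so $a\mapsto A,\ b\mapsto B$ defines a surjection $\psi\colon\Delta(k,k,k)\twoheadrightarrow\Gamma_{-q}$. The whole statement reduces to showing that $\psi$ is injective.

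\smallskip
\noindent\textbf{Step 2 (putting $\Gamma_{-q}$ on $\H^{2}$).} Imposing $X^{*}HX=H$ for $X=A$ and $X=B$ and using $|q|=1$ forces, up to a positive scalar, the $\beta_{-q}$-invariant Hermitian form
\[
H=\begin{pmatrix}1 & -\dfrac{1}{1-q}\\[2.2mm] -\dfrac{1}{1-\bar q} & 1\end{pmatrix},
\qquad
\det H \;=\; 1-\frac{1}{|1-q|^{2}}\;=\;1-\frac{1}{4\sin^{2}(m\pi/2k)}.
\]
In the regime relevant to the statement $\det H<0$, so $H$ has signature $(1,1)$ and $\Gamma_{-q}\subset PU(1,1)\cong PSL(2,\R)=\mathrm{Isom}^{+}(\H^{2})$, the three generators acting as elliptic rotations. (In the complementary regime $H$ is definite, $\Gamma_{-q}$ is a finite subgroup of $PU(2)$, and the statement fails; this is what the hypotheses relating $m$ and $k$ must exclude.)

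\smallskip
\noindent\textbf{Step 3 (the triangle and Poincaré's theorem).} Computing fixed points, $\bar A$ fixes $\infty$ and $(1-q)^{-1}$ while $\bar B$ fixes $0$ and $(q-1)/q$; these are pairwise distinct (they could only collide if $q^{2}-q+1=0$, i.e.\ $q$ a primitive $6$-th root of unity, which is incompatible with $\gcd(m,k)=1$ and $k\geq3$), so $\Gamma_{-q}$ is non-elementary. Let $P,Q$ be the fixed points of $\bar A,\bar B$ inside $\H^{2}$ and $R$ that of $(\bar A\bar B)^{-1}$. Writing each of $\bar A,\bar B$ as a product of two hyperbolic reflections sharing the geodesic $PQ$ as a mirror, the relation $\bar A\cdot\bar B\cdot(\bar A\bar B)^{-1}=1$ displays $P,Q,R$ as the vertices of a hyperbolic triangle $\Delta$; the numerical condition on $m$ and $k$ is exactly what makes $\Delta$ a non-degenerate finite-area triangle with all angles equal to $\pi/k$ (using $3/k<1$, i.e.\ $k\geq4$). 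Doubling $\Delta$ across one edge yields a polygon whose side-pairings are $\bar A^{\pm1},\bar B^{\pm1}$ and whose vertex cycles are precisely the relators $A^{k},B^{k},(AB)^{k}$, so the Poincaré polygon theorem shows $\Gamma_{-q}$ is discrete and that $A^{k}=B^{k}=(AB)^{k}=1$ is a complete set of relations; hence $\psi$ is an isomorphism. (Alternatively, having identified the quotient orbifold $\H^{2}/\Gamma_{-q}$ with the $(k,k,k)$-triangle orbifold one gets an abstract isomorphism $\Gamma_{-q}\xrightarrow{\sim}\Delta(k,k,k)$, and since $\Delta(k,k,k)$ is a cocompact Fuchsian group it is linear over $\C$, hence residually finite, hence Hopfian, which forces the surjection $\psi$ to be an automorphism.)

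\smallskip
\noindent\textbf{Where the difficulty lies.} Steps 1 and 2 are bookkeeping and a forced linear-algebra computation. The real content is Step 3: verifying that the three fixed points genuinely span a hyperbolic triangle with angles $\pi/k$ — equivalently, that the product of the relevant rotations closes up with order \emph{exactly} $k$, neither collapsing to a spherical configuration (finite image, as for small roots of unity) nor opening up. This is the step that consumes the arithmetic of the chosen root of unity and pins down which pairs $(m,k)$ are admissible; once the triangle is secured, Poincaré's polygon theorem (or, equivalently, an explicit ping-pong on $\partial\H^{2}$, or a Gauss--Bonnet covolume comparison against $\mathrm{area}(\H^{2}/\Delta(k,k,k))=2\pi(1-3/k)$) completes the argument routinely.
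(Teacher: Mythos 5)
Your Steps 1 and 2 are computationally sound: the orders of $\bar A,\bar B,\overline{AB}$ in $PGL(2,\C)$ are indeed exactly $k$, the surjection $\Delta(k,k,k)\twoheadrightarrow\Gamma_{-q}$ is correctly set up, and the invariant Hermitian form $H$ with $\det H=1-\frac{1}{4\sin^{2}(m\pi/(2k))}$ checks out. The gap is in how you dispose of the two signature regimes. You assert that the hypotheses ``must exclude'' the case $\det H>0$, and that there $\Gamma_{-q}$ is finite and the statement fails. Neither claim is correct. The hypotheses are only $m<k$, $\gcd(m,k)=1$, $k\geq 4$, while $\det H<0$ holds precisely when $4\sin^{2}(m\pi/(2k))<1$, i.e.\ $3m<k$. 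So for most admissible $m$ (e.g.\ $k=7$, $m=3$) the form is positive definite, $\Gamma_{-q}$ sits inside the compact group $PU(2)$, and yet the proposition still asserts it is the infinite group $\Delta(k,k,k)$: it is then an infinite \emph{non-discrete} subgroup of $PU(2)$, not a finite one, and your Poincar\'e-polygon argument simply has nothing to say about it. There is a second, related problem even inside the indefinite regime: the rotation angle of $\bar A$ about its fixed point in $\H^{2}$ is $2m\pi/k$, not $2\pi/k$, so the triangle you build has vertex angles $m\pi/k$; unless $m=1$ this is not of the form $\pi/(\mbox{integer})$, Poincar\'e's theorem does not apply, and the group is not discrete in $PU(1,1)$ either. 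In effect your geometric argument, once completed, establishes the presentation only for essentially the single value $m=1$.

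The missing idea is Galois conjugation. The entries of $A$ and $B$ lie in $\Z[q]$, so a field automorphism $\sigma$ of $\C$ applied entrywise gives an isomorphism $\Gamma_{-q}\cong\Gamma_{-\sigma(q)}$ carrying generators to generators; an abstract presentation is insensitive to this, so it suffices to verify it for one primitive root in each Galois orbit --- e.g.\ $m=1$, where your hyperbolic picture is genuinely available --- and transport it to the others. This is the route taken in the cited reference, and it is also where one must be careful about which parameters really are conjugate: as literally stated, with only $\gcd(m,k)=1$, the proposition admits parameters such as $k=5$, $m=2$, for which $-q$ is a primitive $10$th root of unity and the Burau image is the finite icosahedral group, so any complete argument has to engage with the arithmetic of $m$ modulo $2k$ rather than modulo $k$. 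As written, your proof covers only the sub-case $m=1$ and replaces the remaining cases with an incorrect dismissal.
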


\vspace{0.2cm}\noindent
If $n$ is odd $n=2k+1$, then the group $\Gamma_{-q}$ is a quotient of the 
triangle group associated to $\Delta$, which embeds into the group 
associated to some sub-triangle $\Delta'$ of $\Delta$. 

\begin{proposition}[\cite{FK1}]\label{odd}
Let $0<m<2k+1$ be such that ${\rm gcd}(m,2k+1)=1$ and $k\geq 3$. 
Then the group 
$\Gamma_{-\exp\left(\frac{\pm 2m\pi i}{2k+1}\right)}$ is  isomorphic 
to the  triangle group  $\Delta(2,3,2k+1)$ and has the 
following presentation (in terms of our generators $A,B$): 
\[ 
\Gamma_{-\exp\left(\frac{\pm 2m\pi i}{2k+1}\right)}=
\langle A,B; A^{2k+1}=B^{2k+1}=(AB)^{2k+1}=1, \,
(A^{-1}B^k)^2=1, \, (B^kA^{k-1})^3=1 \rangle. \]
\end{proposition}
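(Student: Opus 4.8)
Throughout write $n=2k+1$. Since replacing $m$ by another residue coprime to $n$ amounts to applying a Galois automorphism $q\mapsto q^{j}$ to all the matrix entries, and replacing $+m$ by $-m$ amounts to complex conjugation — both of which are ring isomorphisms carrying the asserted presentation to itself — it suffices to treat one fixed primitive $n$‑th root of unity $q$. The plan is: (a) use an eigenvalue computation to see that $\Gamma_{-q}$ is a quotient of the presented group $G$ in the statement, and that $G$ is abstractly $\Delta(2,3,n)$; and (b) use unitarizability to realize $\Gamma_{-q}$ inside $PSL(2,\R)$ and Poincar\'e's polygon theorem to see that this quotient map is an isomorphism.

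For (a): the explicit matrices show that $A$ and $B$ each have eigenvalues $\{1,q^{2}\}$, and since $n$ is odd $q^{2}$ is again primitive of order $n$, so $A,B$ have order exactly $n$ in $PGL(2,\C)$. A short computation of $\operatorname{tr}(AB)$ together with $\det(AB)=q^{4}$ shows $AB$ has eigenvalues $\{-q,-q^{3}\}$, hence also order $n$; so $\Gamma_{-q}$ is a quotient of $\langle a,b\mid a^{n}=b^{n}=(ab)^{n}=1\rangle=\Delta(n,n,n)$. Using $q^{2k}=q^{-1}$ one computes likewise that $A^{-1}B^{k}$ has determinant $q^{-3}$ and trace $0$, hence is an involution in $PGL(2,\C)$, and that $B^{k}A^{k-1}$ has determinant $q^{-4}$ and trace with square $q^{-4}$, hence has order $3$. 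Thus all five relations of the asserted presentation hold for $A,B$, and $A\mapsto A$, $B\mapsto B$ defines a surjection $G\twoheadrightarrow\Gamma_{-q}$, where
\[ G=\langle A,B\mid A^{n}=B^{n}=(AB)^{n}=(A^{-1}B^{k})^{2}=(B^{k}A^{k-1})^{3}=1\rangle .\]
To see $G\cong\Delta(2,3,n)$, set $u=A^{-1}B^{k}$ and $v=B^{k}A^{k-1}$, so $u^{2}=v^{3}=1$. From $B^{k}=Au$ and $B^{n}=1$ (whence $B=(B^{k})^{-2}$, since $-2k\equiv 1\pmod n$) one gets $B=(uA^{-1})^{2}$; writing $w=uA^{-1}$ one checks in $G$ that $B=w^{2}$, $A=w^{-1}u$, $w^{n}=1$, $(uw)^{n}=1$ and $v=w^{-1}(w^{-1}u)^{k-1}$, so $G=\langle u,w\rangle$, and a Tietze rewriting of these relations yields $G=\langle u,v\mid u^{2}=v^{3}=(uv)^{n}=1\rangle=\Delta(2,3,n)$; concretely $uv=A^{-1}B^{-1}A^{k-1}$, whose order is seen to be $n$ by another trace/determinant computation.

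For (b): for these roots of unity $\beta_{-q}$ is unitarizable (cf. the proof of Proposition~\ref{even}), and the invariant Hermitian form on $\C^{2}$ is indefinite (a finite computation), so $\Gamma_{-q}\subset PU(1,1)\cong PSL(2,\R)=\mathrm{Isom}^{+}(\H^{2})$, with $u,v,uv$ genuine elliptic elements of orders $2,3,n$. Since $k\ge 3$ forces $\tfrac12+\tfrac13+\tfrac1n<1$, the group $\Delta(2,3,n)$ is cocompact Fuchsian. Taking the fixed points of $u,v,uv$ as the vertices of the (possibly non‑convex) hyperbolic polygon obtained by doubling across an edge the sub‑triangle $\Delta'$ (a $(2,3,n)$‑triangle) — keeping in mind that the rotation angles may be $2\pi a/n$ with $\gcd(a,n)=1$ rather than $2\pi/n$ — one checks that the side‑pairings are $u$ and $v$ and that the cycle/angle conditions hold, and Poincar\'e's polygon theorem then gives that $\langle u,v\rangle=\Gamma_{-q}$ is discrete with this polygon as fundamental domain and presentation $\langle u,v\mid u^{2}=v^{3}=(uv)^{n}=1\rangle$. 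Hence $\Gamma_{-q}\cong\Delta(2,3,n)$, and since $\Gamma_{-q}$ is also a quotient of $G\cong\Delta(2,3,n)$ and triangle groups are Hopfian, the surjection $G\twoheadrightarrow\Gamma_{-q}$ is an isomorphism. (Alternatively one invokes the rigidity of Fuchsian triangle groups: a homomorphism $\Delta(2,3,n)\to PSL(2,\R)$ sending the standard generators to elliptics of orders $2,3,n$ with non‑elementary image is automatically faithful and discrete.)

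The main obstacle is step (b) — upgrading ``$\Gamma_{-q}$ is a quotient of $\Delta(2,3,n)$'' to ``$\Gamma_{-q}\cong\Delta(2,3,n)$'', i.e.\ showing the image is genuinely discrete of the correct covolume rather than a proper quotient. Concretely, the delicate point is to pin down the fundamental polygon when the elliptic generators act by ``fast'' rotations of angle $2\pi a/n$ and to verify the hypotheses of Poincar\'e's theorem in that situation; by contrast everything in step (a) reduces to a finite list of eigenvalue and trace computations together with a routine, if somewhat intricate, Tietze rewriting that must be carried out uniformly in $n$.
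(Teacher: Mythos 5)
Your step (a) is correct and checkable: using $q^{2k}=q^{-1}$ one finds that $B^{k}$ is the lower triangular matrix with rows $(1,0)$ and $(1,q^{-1})$, whence ${\rm tr}(A^{-1}B^{k})=0$ and ${\rm tr}(B^{k}A^{k-1})^{2}=\det(B^{k}A^{k-1})=q^{-4}$, so all five relations hold in $PGL(2,\C)$ and $\Gamma_{-q}$ is a quotient of the presented group $G$. This is a genuinely different architecture from the paper's: the paper does not verify relations by hand, but quotes Deraux's theorem that the group generated by the vertex rotations of the $(\tfrac{2k+1}{2},\tfrac{2k+1}{2},\tfrac{2k+1}{2})$-triangle embeds in the $(2,3,2k+1)$-group of a barycentric subtriangle, and then shows (Lemma \ref{oddlem}) that this embedding is onto. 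That citation is precisely the discreteness/faithfulness input that your step (b) attempts to re-derive. (A smaller point: your Tietze reduction asserts $w^{n}=1$, which does not follow from $B^{n}=w^{2n}=1$ alone and needs its own argument; the paper is, admittedly, equally terse about the identification $G\cong\Delta(2,3,n)$.)

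The genuine gap is in step (b). Poincar\'e's polygon theorem demands, at a one-vertex cycle whose cycle transformation is elliptic of order $d$, that the polygon angle there be exactly $2\pi/d$ and that the pairing rotate by exactly $\pm 2\pi/d$; moreover the three rotation angles must have coherent orientations so that the fixed points of $u,v,uv$ span a genuine $(\pi/2,\pi/3,\pi/n)$-triangle. If the order-$n$ generator rotates by $2\pi a/n$ with $a\neq\pm 1$, these conditions \emph{fail} for every triangle on those fixed points, and the generated group is in general non-discrete — so your parenthetical ``keeping in mind that the rotation angles may be $2\pi a/n$ \dots one checks that the cycle/angle conditions hold'' is internally contradictory. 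The argument therefore works only for a specific residue $m$ for which (i) the rotation angles of $u$, $v$ and the order-$n$ element are $\pi$, $\pm 2\pi/3$, $\pm 2\pi/n$ with compatible signs (readable off the eigenvalue ratios, e.g.\ $\{q^{2},1\}$ for $A$ and $\{-q,-q^{3}\}$ for $AB$), and (ii) the invariant Hermitian form is indefinite. You have exhibited no such $m$ and performed neither computation; this is the entire mathematical content of the discreteness claim, not a routine verification. The fallback you offer is false: the Galois conjugates you set aside at the outset themselves give (for suitable residues) non-discrete representations of $\Delta(2,3,n)$ into $PSL(2,\R)$ carrying the standard generators to elliptics of orders $2,3,n$ with non-elementary image, so ``automatically faithful and discrete'' cannot be correct as stated — the usable rigidity statement requires maximality of the Euler number, which again is computed from the very rotation angles you have not determined.
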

\begin{proof} 
Here is a sketch of the proof. 
Deraux proved in (\cite{Deraux}, Theorem 7.1) 
that the group  $\Delta(\frac{2k+1}{2},\frac{2k+1}{2},\frac{2k+1}{2})$, 
which is generated by the rotations $a,b,c$ around the vertices  of 
the triangle $\Delta$   
embeds into the triangle group associated to a smaller triangle 
$\Delta'$. One constructs $\Delta'$ by considering all 
geodesics of $\Delta$ joining a vertex and the midpoint of its 
opposite side. The three median geodesics pass through the 
barycenter of $\Delta$ and subdivide $\Delta$ into 6 equal triangles.
We can take for $\Delta'$ any one of the 6 triangles of the subdivision. 
It is immediate that $\Delta'$ has angles 
$\frac{\pi}{2k+1}, \frac{\pi}{2}$ and $\frac{\pi}{3}$ so that the associated 
triangle group is $\Delta(2,3,2k+1)$. 
This group has the presentation:  
\[  \Delta(2,3,2k+1)=\langle \alpha, u, v \, ; \, \alpha^{2k+1}=u^3=v^2=\alpha uv=1\rangle, \]
where the generators are the rotations of double angle around the 
vertices of the triangle $\Delta'$.

\begin{lemma}\label{oddlem}
The natural embedding of $\Delta(\frac{2k+1}{2},
\frac{2k+1}{2},\frac{2k+1}{2})$ into 
$\Delta(2,3,2k+1)$ is an isomorphism. 
\end{lemma}
\begin{proof}
A simple geometric computation shows that:  
\[ a= \alpha^2, \, b=v \alpha^2v=u^2\alpha^2u, \, 
c=u\alpha^2 u^2. \]
Therefore $\alpha=a^{k+1}\in \Delta(\frac{2k+1}{2},
\frac{2k+1}{2},\frac{2k+1}{2})$.

\vspace{0.2cm}\noindent
From the relation 
$\alpha u v=1$ we derive $a^{k+1}uv=1$, and 
thus $u=a^kv$. The relation $u^3=1$ reads now 
$a^k(va^kv)a^kv=1$ and replacing $b^k$ by $va^kv$ we find that  
$v=a^kb^ka^k\in \Delta(\frac{2k+1}{2},
\frac{2k+1}{2},\frac{2k+1}{2})$. 

\vspace{0.2cm}\noindent
Further $u=a^kv=a^{-1}b^ka^k\in \Delta(\frac{2k+1}{2},
\frac{2k+1}{2},\frac{2k+1}{2})$. This means that 
$\Delta(\frac{2k+1}{2},
\frac{2k+1}{2},\frac{2k+1}{2})$ is actually  
$\Delta(2,3,2k+1)$, as claimed.
\end{proof}

\vspace{0.2cm}\noindent
It suffices now to find a presentation of $\Delta(2,3,2k+1)$ that uses the 
generators $A=a, B=b$. It is not difficult to 
show that the group  with the presentation of the statement 
is isomorphic to $\Delta(2,3,2k+1)$, the inverse 
homomorphism sending $\alpha$ into $A^{k+1}$, 
$u$ into $A^{-1}B^kA^k$ and $v$ into $A^kB^kA^k$.  
\end{proof}

\vspace{0.2cm}\noindent 
A direct consequence of Propositions \ref{even} and \ref{odd} is the 
following abstract description of the image of Burau's representation: 

\begin{corollary}\label{inftriang}
If  $q$ is not a primitive root of unity of order in   
the set $\{1,2,3,4,6,10\}$,
then $\Gamma_q$ is an infinite triangle group. 
\end{corollary}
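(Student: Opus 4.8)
The plan is to derive this directly from Propositions \ref{even} and \ref{odd}; the whole content of the corollary is to match the order of $q$ to the parameters occurring in those two propositions. Let $q$ be a primitive $n$-th root of unity (the relevant case), with $n\notin\{1,2,3,4,6,10\}$. The first step is the elementary observation that the order $d$ of $-q$ is
\[
d=\begin{cases}
2n,& n\text{ odd},\\[2pt]
n/2,& n\equiv 2\ (\mathrm{mod}\ 4),\\[2pt]
n,& n\equiv 0\ (\mathrm{mod}\ 4);
\end{cases}
\]
in particular $d$ is even in the first and third cases, while $d=n/2$ is odd in the middle one. Since $-q$ is then a primitive $d$-th root of unity, one can choose an integer $m$ coprime to $d$ — and, using the sign ambiguity, lying in the range required by the relevant proposition — with $q=-\exp\!\big(\tfrac{\pm 2m\pi i}{d}\big)$.

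Next I would feed this into the appropriate proposition. If $n$ is odd or $n\equiv 0\ (\mathrm{mod}\ 4)$, then $d=2k$ is even and Proposition \ref{even} identifies $\Gamma_q$ with $\langle A,B\mid A^{k}=B^{k}=(AB)^{k}=1\rangle=\Delta(k,k,k)$, where $k=n$ in the first subcase and $k=n/2$ in the second. If $n\equiv 2\ (\mathrm{mod}\ 4)$, then $d=2k+1$ is odd and Proposition \ref{odd} gives $\Gamma_q\cong\Delta(2,3,2k+1)=\Delta(2,3,n/2)$. The hypotheses $k\geq 4$ of Proposition \ref{even} and $k\geq 3$ of Proposition \ref{odd} read here as $n\geq 5$ (odd case), $n\geq 8$ ($n\equiv 0\ (\mathrm{mod}\ 4)$), and $n\geq 14$ ($n\equiv 2\ (\mathrm{mod}\ 4)$); the orders violating these bounds are precisely $\{1,3\}$, $\{4\}$ and $\{2,6,10\}$, whose union is the excluded set. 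Thus the hypothesis of the corollary is exactly what makes one of the two propositions applicable.

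Finally, I would observe that each triangle group so obtained is infinite, i.e. hyperbolic. For $\Delta(k,k,k)$ with $k\geq 4$ this is the inequality $\tfrac{3}{k}<1$, and for $\Delta(2,3,2k+1)$ with $k\geq 3$ it is $\tfrac12+\tfrac13+\tfrac1{2k+1}<1$, that is $2k+1>6$; both hold, which completes the argument. There is no real obstacle here — the only point demanding care is the bookkeeping in the first step: correctly tracking how the substitution $q\mapsto-q$, which mediates between the normalization of $\beta_{-q}$ used in Propositions \ref{even} and \ref{odd} and the index $q$ appearing in the statement, affects both the order and the parity, since it is precisely this shift that decides whether $\Gamma_q$ appears as a $\Delta(k,k,k)$ or as a $\Delta(2,3,2k+1)$.
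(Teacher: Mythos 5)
Your derivation is correct and is exactly the argument the paper intends: the corollary is stated there as "a direct consequence of Propositions \ref{even} and \ref{odd}" with no further detail, and your case analysis (tracking how $q\mapsto -q$ changes the order and its parity, so that $d$ even feeds into Proposition \ref{even} with $k=d/2$ and $d$ odd into Proposition \ref{odd} with $2k+1=d$) supplies precisely the omitted bookkeeping, recovering the excluded set $\{1,2,3,4,6,10\}$ as the union $\{1,3\}\cup\{4\}\cup\{2,6,10\}$ and checking hyperbolicity of $\Delta(k,k,k)$, $k\geq 4$, and $\Delta(2,3,2k+1)$, $k\geq 3$.
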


\vspace{0.2cm}\noindent 
Alternatively, we obtain a set of normal generators for the kernel 
of Burau's representation, as follows: 

\begin{corollary}\label{kernelburau}
Let $n\not\in\{1, 6\}$ and $q$ a primitive root of unity of order $n$. 
We denote by $N(G)$ the normal closure of a subgroup 
$G$ of $\langle g_1^2,g_2^2\rangle$. 
Then the kernel 
$\ker \beta_{-q}:{\langle g_1^2,g_2^2\rangle\to PGL(2,\C)}$
of the restriction of Burau's representation is given by: 
\[
\left\{\begin{array}{ll} 
N(\langle g_1^{2k},g_2^{2k}, (g_1^2g_2^2)^k\rangle), & {\rm if }\: n=2k;\\
N(\langle g_1^{2(2k+1)},g_2^{2(2k+1)}, (g_1^2g_2^2)^{2k+1},  
(g_1^{-2}g_2^{2k})^{2}, (g_2^{2k}g_1^{2(k-1)})^3\rangle), & {\rm if }\: n=2k+1.\\ 
\end{array}\right.\]  
\end{corollary}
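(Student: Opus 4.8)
The plan is to read the kernel off the presentations of $\Gamma_{-q}$ supplied by Propositions \ref{even} and \ref{odd}. Recall that $\langle g_1^2,g_2^2\rangle$ is free of rank $2$ and that the restriction of $\beta_{-q}$ to it is the epimorphism $\langle g_1^2,g_2^2\rangle\to\Gamma_{-q}\subset PGL(2,\C)$ sending $g_1^2\mapsto A$ and $g_2^2\mapsto B$. I would use the elementary fact that for a group presented as $\langle x_1,x_2\;;\;w_1,\dots,w_s\rangle$ the kernel of the canonical surjection from the free group $\langle x_1,x_2\rangle$ equals the normal closure of $\{w_1,\dots,w_s\}$. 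Hence, once $\Gamma_{-q}$ is exhibited on the generators $A,B$ by an explicit presentation, $\ker\beta_{-q}$ is the normal closure of the corresponding list of relators, rewritten as words in $g_1^2,g_2^2$ through $A=g_1^2$, $B=g_2^2$.

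First I would normalize the root of unity so that the two propositions apply. If $q$ has order $n=2k+1$, write $q=\exp(2m\pi i/(2k+1))$ with $0<m<2k+1$ and $\gcd(m,2k+1)=1$; if $q$ has order $n=2k$ with $k\geq 2$, write $q=\exp(\pm 2m\pi i/(2k))$ with a suitable sign and $0<m<k$, so that the conditions $\gcd(m,k)=1$ and $m<k$ required in Proposition \ref{even} hold automatically (for instance $\gcd(k,2k)=k\neq 1$ rules out $m=k$). Provided moreover $k\geq 4$ in the even case or $k\geq 3$ in the odd case, Propositions \ref{even} and \ref{odd} give the presentations quoted there, and substituting $A=g_1^2$, $B=g_2^2$ settles these cases at once. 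In the even case $n=2k$ the relators $A^k$, $B^k$, $(AB)^k$ become $g_1^{2k}$, $g_2^{2k}$, $(g_1^2g_2^2)^k$, giving $\ker\beta_{-q}=N(\langle g_1^{2k},g_2^{2k},(g_1^2g_2^2)^k\rangle)$. In the odd case $n=2k+1$ the relators $A^{2k+1}$, $B^{2k+1}$, $(AB)^{2k+1}$, $(A^{-1}B^k)^2$, $(B^kA^{k-1})^3$ become $g_1^{2(2k+1)}$, $g_2^{2(2k+1)}$, $(g_1^2g_2^2)^{2k+1}$, $(g_1^{-2}g_2^{2k})^2$, $(g_2^{2k}g_1^{2(k-1)})^3$, which is precisely the asserted generating set. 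This covers all odd $n\geq 7$ and all even $n\geq 8$.

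What is left is the bookkeeping around the small orders, and this is the only genuinely nontrivial point. The values $n\in\{2,3,4,5\}$ fall outside the range of the propositions, and I would dispatch them by a direct computation: there $\Gamma_{-q}$ is a fixed finite group (trivial for $n=2$, the Klein four-group for $n=4$, the tetrahedral group for $n=3$, the icosahedral group for $n=5$), and a short Reidemeister--Schreier argument --- or simply checking that the listed words die while the quotient has the expected order --- identifies $\ker\beta_{-q}$ with the normal closure of the words obtained from the two formulas by taking $k=1$ ($n=2,3$) or $k=2$ ($n=4,5$); for $n=3$ the factor $g_1^{2(k-1)}=g_1^0$ vanishes, so the last relator collapses to $g_2^6$, already present. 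Finally one must check that the excluded orders $n\in\{1,6\}$ are genuine exceptions: for $n=1$ the matrices $A$ and $B$ are noncommuting parabolics generating a free group, so $\beta_{-q}$ is injective on $\langle g_1^2,g_2^2\rangle$ and the kernel is trivial rather than the whole group that the formula would predict; for $n=6$ the image is the Euclidean triangle group $\Delta(3,3,3)$, which lies outside the scope of Proposition \ref{even} (whose hypothesis $k\geq 4$ corresponds to a hyperbolic triangle) and is set aside here. Apart from these finitely many verifications, the proof is a \emph{purely formal} transcription of Propositions \ref{even} and \ref{odd}.
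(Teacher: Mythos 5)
Your argument is correct and is exactly the route the paper takes: the corollary is read off from the presentations of $\Gamma_{-q}$ on the generators $A=\beta_{-q}(g_1^2)$, $B=\beta_{-q}(g_2^2)$ given in Propositions \ref{even} and \ref{odd}, the kernel being the normal closure of the pulled-back relators under the substitution $A=g_1^2$, $B=g_2^2$. Your additional bookkeeping for the orders $n\in\{2,3,4,5\}$, which fall outside the stated hypotheses of those propositions, addresses a point the paper passes over silently (deferring to \cite{FK1}), so it is a welcome completion rather than a divergence.
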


\section{Johnson subgroups and proof of Theorem \ref{Johnson}}

\subsection{Proof of Theorem \ref{Johnson}}\label{pfjo}
For a group $G$ we denote by $G_{(k)}$ the lower central series 
defined by:  
\[ G_{(1)}=G, \, G_{(k+1)}=[G,G_{(k)}], k\geq 1\]
An  interesting  family of subgroups of the mapping class group is 
the set of higher Johnson subgroups defined as follows. 

\begin{definition}
The $k$-th Johnson subgroup 
$I_g(k)$ is the group of mapping classes of homeomorphisms of the closed 
orientable surface $\Sigma_g$ 
whose action by outer automorphisms on $\pi/\pi_{(k+1)}$ is trivial, where 
$\pi=\pi_1(\Sigma_g)$. 
\end{definition}
\vspace{0.2cm}\noindent
Thus $I_g(0)=M_g$, $I_g(1)$ is the Torelli group commonly 
denoted $T_g$, while $I_g(2)$ is the group generated by the Dehn twists along 
separating simple closed curves and considered by Johnson and Morita 
(see e.g. \cite{John,Mor}), which is often denoted by ${K}_g$.

\vspace{0.2cm}
\noindent 
The proof of Theorem \ref{Johnson} follows from the same 
argument as in \cite{F}, where we proved that 
the image of the quantum representation  $\rho_p$ is infinite 
for all $p$ in the given range.  The values of $p$ which are excluded 
correspond to the TQFT's 
${\mathcal V}_3, {\mathcal V}_2, 
{\mathcal V}_{4}, {\mathcal V}_6$, ${\mathcal V}_{8}$ and 
${\mathcal V}_{12}$
and it is known that the images of 
quantum representations are finite 
in these cases.

\vspace{0.2cm}\noindent 
Before we proceed we have to make the cautionary remark that  
$\rho_p$ is only a projective representation.  
Here and henceforth when speaking about Burau's representation we will mean the 
representation $\beta_{q}:B_3\to PGL(2,\C)$ taking values 
in matrices modulo scalars.

\vspace{0.2cm}\noindent 
We will  first consider the generic case where the 
genus is large and the $10$-th roots of unity are discarded.  
This will prove Theorem \ref{Johnson} in most cases. 
Specifically we will prove first:  

\begin{proposition}\label{Johnsonfree}
Assume that $g\geq 4$. Then the 
image $\rho_p(\left(\langle g_1^2, g_2^2\rangle\right)_{(k)})$ contains a 
free non-abelian group for every $k$ and 
$p\not\in\{3, 4, 8, 12, 16, 24, 40\}$. 
\end{proposition}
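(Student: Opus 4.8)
The plan is to reduce the statement to a purely group-theoretic assertion about the image of Burau's representation $\beta_{-q}$ on the lower central series of $\mathbb{F}_2 = \langle g_1^2, g_2^2\rangle$, and then to exploit the fact, established in Section \ref{tri}, that $\Gamma_{-q}$ is an infinite (hyperbolic) triangle group for $q$ outside a small list of roots of unity. First I would recall that a suitably embedded $4$-holed sphere in $\Sigma_g$ gives, after capping, an embedding $PB_3 \hookrightarrow M_g$ whose composition with $\rho_p$ restricts on $\langle g_1^2,g_2^2\rangle$ to the Burau representation $\beta_q$ at the root of unity $A_p$ (this is the mechanism from \cite{F} alluded to in the sketch after Theorem \ref{Johnson}); since the center $(g_1g_2)^3$ of $B_3$ acts as a scalar $C$, projectively we are left with $\beta_{-q}: \langle g_1^2,g_2^2\rangle \to PGL(2,\mathbb{C})$, i.e.\ the group $\Gamma_{-q}$. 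Extending elements supported on the sub-surface by the identity carries $\left(\langle g_1^2,g_2^2\rangle\right)_{(k)}$ into the Johnson subgroup $I_g(k)$, because an element acting trivially outside the sub-surface and lying in the $k$-th term of the lower central series of the free subgroup acts trivially on $\pi/\pi_{(k+1)}$. Thus the Proposition follows once we show that $\beta_{-q}\!\left(\mathbb{F}_2{}_{(k)}\right)$ contains a free non-abelian subgroup for every $k$, where $\mathbb{F}_2 = \langle g_1^2, g_2^2\rangle$ and $q = A_p$.

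For the core group-theoretic step, the key point is that the image $\Gamma_{-q}$ is an infinite triangle group (Corollary \ref{inftriang}), hence non-elementary and in particular it is a hyperbolic group (or a lattice in $PSL(2,\mathbb{R})$ acting on $\mathbb{H}^2$) containing free non-abelian subgroups via the Tits alternative / ping-pong. What we actually need is slightly stronger: a free non-abelian subgroup sitting inside the image of the $k$-th term of the lower central series of $\mathbb{F}_2$. The strategy here is: (i) the image $\Gamma_{-q} = \beta_{-q}(\mathbb{F}_2)$ is a finitely generated non-elementary subgroup of $PSL(2,\mathbb{R})$, so its lower central series images $\left(\Gamma_{-q}\right)_{(k)}$ are non-trivial normal subgroups; since $\Gamma_{-q}$ is non-elementary, every non-trivial normal subgroup is itself non-elementary, hence contains a free non-abelian subgroup (a nonabelian free subgroup is obtained by ping-pong on hyperbolic elements with disjoint fixed-point sets). (ii) One checks that $\beta_{-q}\!\left(\mathbb{F}_2{}_{(k)}\right)$ surjects onto, or at least maps onto a finite-index subgroup of, $\left(\Gamma_{-q}\right)_{(k)}$ — in fact since $\beta_{-q}$ is a homomorphism, $\beta_{-q}(\mathbb{F}_2{}_{(k)}) = \left(\beta_{-q}(\mathbb{F}_2)\right)_{(k)} = \left(\Gamma_{-q}\right)_{(k)}$ exactly (the image of the lower central series is the lower central series of the image). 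So it suffices to show $\left(\Gamma_{-q}\right)_{(k)}$ is non-elementary for all $k$, i.e.\ non-trivial, which follows because a triangle group $\Delta(m,n,p)$ with $\frac1m+\frac1n+\frac1p<1$ is not nilpotent (indeed not virtually solvable), so its lower central series never stabilizes at $1$.

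The main obstacle is organizing the list of excluded $p$. The condition $g\geq 4$ only enters through the existence of the embedded $4$-holed sphere carrying $\beta_q$; the real constraints come from requiring $\Gamma_{-A_p}$ to be infinite, i.e.\ from Propositions \ref{even} and \ref{odd} and Corollary \ref{inftriang}: one must have $A_p$ not a primitive root of unity of order in $\{1,2,3,4,6,10\}$. Translating this through Definition \ref{qrep} (where $A_p$ has order $p$ for $p$ even and order $2p$ for $p$ odd) into conditions on $p$ is where the excluded set $\{3,4,8,12,16,24,40\}$ is pinned down: $p=3$ gives order $6$; $p=4$ gives order $4$; $p=8,12,16,24$ are the even cases producing small-order $A_p$ or landing in finite TQFT's; and $p=40$ is precisely the value for which $A_p = -\exp(2\pi i/40)$ is a primitive $40$-th root of unity whose relevant power lands in the order-$10$ case, making $\Gamma_{-A_p}$ the $(2,3,5)$ spherical (hence finite) triangle group. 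For all remaining $p$ in the stated range, $\Gamma_{-A_p}$ is an infinite triangle group and the argument above applies, so $\rho_p\!\left(\left(\langle g_1^2,g_2^2\rangle\right)_{(k)}\right)$ contains a free non-abelian group for every $k$. I would close by remarking that the free generators can be exhibited explicitly by ping-pong on two hyperbolic elements of $\left(\Gamma_{-A_p}\right)_{(k)}$ with disjoint attracting/repelling pairs on $\partial\mathbb{H}^2$, which is the source of the explicit statements promised in Theorems \ref{free} and \ref{free2}.
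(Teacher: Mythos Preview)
Your overall strategy coincides with the paper's: embed $PB_3\subset M_{0,4}\subset M_g$ via a $4$-holed sphere, pass to the $2$-dimensional invariant subspace on which $\rho_p$ restricts to Burau's representation, and then exploit that the projective Burau image $\Gamma_{-q_p}=\beta_{-q_p}(\langle g_1^2,g_2^2\rangle)$ is an infinite hyperbolic triangle group (Corollary~\ref{inftriang}). One correction of detail: the root of unity entering Burau is not $A_p$ itself but the $q_p$ of Lemma~\ref{contain} (e.g.\ $q_p=-A_p^{-4}$ when $p\equiv 0\pmod 4$, $q_p=-A_p^{-8}$ when $p$ is odd $\geq 7$); it is the order of $-q_p$, not of $A_p$, that must avoid $\{1,2,3,4,6,10\}$, and tracing this through the formulas for $o(p)$ is what produces the excluded list $\{3,4,8,12,16,24,40\}$.

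Where you genuinely diverge from the paper is in the last step, showing that $(\Gamma_{-q_p})_{(k)}$ contains a non-abelian free group. You realize $\Gamma_{-q_p}$ as a non-elementary cocompact Fuchsian group, observe that it is not nilpotent (it contains $\mathbb F_2$), so each $(\Gamma_{-q_p})_{(k)}$ is a non-trivial normal subgroup, and invoke the standard fact that a non-trivial normal subgroup of a non-elementary Fuchsian group is itself non-elementary, hence contains a free group by ping-pong. The paper instead unitarizes and projects to $SO(3)$: Lemma~\ref{nonsol} shows $(\Gamma_{-q_p})_{(k)}$ is non-solvable and not isomorphic to $A_5$, whence by the classification of finite subgroups of $SO(3)$ it is neither finite nor abelian, therefore dense in $SO(3)$, and density yields a free subgroup. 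Your Fuchsian argument is a bit more direct and stays on the hyperbolic side; the paper's $SO(3)$ route ties in with the unitary structure of $\rho_p$ and gives density as a bonus. Both are valid, and in either case the lift back from a free subgroup of $\beta_{-q_p}\bigl((\mathbb F_2)_{(k)}\bigr)$ to one of $\rho_p\bigl((\mathbb F_2)_{(k)}\bigr)$ is immediate: lift free generators through the surjection induced by restriction to the invariant $2$-plane, and any relation among the lifts would descend to a relation in the free image.
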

\begin{proof}
The first step of the proof provides us with enough 
elements of $I_g(k)$ having their support contained in a 
small subsurface of $\Sigma_g$. 

\vspace{0.2cm}\noindent 
Specifically we embed $\Sigma_{0,4}$ into $\Sigma_g$ by means of curves 
$c_1,c_2,c_3,c_4$  as in the figure below. Then the curves $a$ and $b$ 
which  are surrounding two of the holes of $\Sigma_{0,4}$ 
are separating. 

\begin{center}
\includegraphics[scale=0.4]{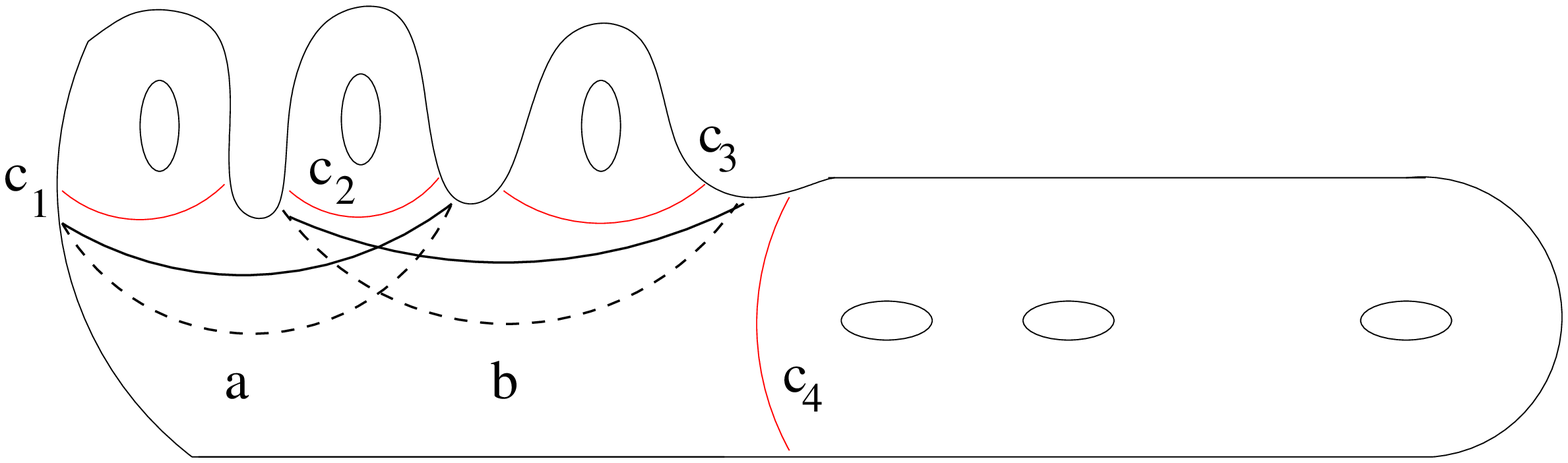}
\end{center}

\vspace{0.2cm}\noindent
The pure braid group $PB_3$ embeds into $M_{0,4}$ using a 
non-canonical splitting of the surjection $M_{0,4}\to PB_3$.  
Furthermore, $M_{0,4}$ embeds into $M_g$ when $g\geq 4$, by using the 
homomorphism induced by the inclusion of 
$\Sigma_{0,4}$ into $\Sigma_g$ as in the figure. 
Then the group generated by the Dehn twists $a$ and $b$ is identified 
with the free subgroup generated by $g_1^2$ and $g_2^2$ into $PB_3$. 
Moreover, $PB_3$ has a natural action on a 
a subspace of the space of conformal blocks
associated to $\Sigma_g$ as in \cite{F}, which is isomorphic 
to the restriction of Burau's representation at some root of unity 
depending on $p$. Notice that the two Dehn twists above are  elements 
of ${K}_g$.  

\vspace{0.2cm}\noindent
We will need the following Proposition whose proof will be given in 
section \ref{pcs}: 

\begin{proposition}\label{cseries} 
The above embedding of $PB_3$ into $M_g$ sends 
$\left(PB_3\right)_{(k)}$ into $I_g(k)$. 
\end{proposition}

\vspace{0.2cm}\noindent 
Recall now that $\langle g_1^2, g_2^2\rangle$ is a normal 
free subgroup of $PB_3$. 
The second ingredient needed in the proof of Proposition 
\ref{Johnsonfree} is the following Proposition which will be proved in 
\ref{jl}:  

\begin{proposition}\label{jolarge}
Assume that $g\geq 4$. Then the 
image $\rho_p(\left(\langle g_1^2, g_2^2\rangle\right)_{(k)})$ contains a 
free non-abelian group for every $k$ and 
$p\not\in\{3, 4, 8, 12, 16, 24, 40\}$. 
\end{proposition}

\vspace{0.2cm}\noindent 
Thus the group $\rho_{p}(\left(PB_3\right)_{(k)})$ contains 
$\rho_{p}(\langle g_1^2,g_2^2\rangle_{(k)})$  and so  
it also contains a free non-abelian group. 
Therefore, Proposition \ref{cseries}  implies
that  $\rho_p(I_g(k))$ contains a free non-abelian subgroup, 
which will complete the proof of  Proposition \ref{Johnsonfree}. 
\end{proof}

\vspace{0.2cm}\noindent 
We further consider the remaining cases and briefly outline  
in section \ref{exc} the modifications 
needed to make the same strategy work  also for small genus surfaces and for those  
values of the parameter $p$  which were excluded above, namely: 
\begin{proposition}\label{excases}
Assume that $g$ and $p$ verify one of the following conditions: 
\begin{enumerate}
\item $g=2$, $p$ is even and $p\not\in\{4,8,12,16, 24,40\}$;
\item $g=3$ and $p\not\in\{3, 4,8,12,16, 24,40\}$;
\item $g\geq 4$ and $p=40$. 
\end{enumerate}
Then $\rho_p(M_g)$ contains a free non-abelian group. 
\end{proposition}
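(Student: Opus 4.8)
The plan is to follow the proof of Proposition~\ref{Johnsonfree} with two simplifications. Since we now only need a free non-abelian subgroup of $\rho_p(M_g)$, and not of one of its Johnson subgroups, we may drop the requirement that the curves $a,b$ bounding the embedded four-holed sphere be separating; this removes the constraint $g\geq 4$ that came from that condition. Moreover we are free to change both the holed sub-surface $\Sigma'$ of $\Sigma_g$ that is used and the admissible colours carried by its boundary. In every case the problem reduces, exactly as in \cite{F}, to exhibiting an embedding $PB_3\hookrightarrow M_g$ through the mapping class group of such a $\Sigma'$, together with a colouring of $\partial\Sigma'$, for which the restriction of $\rho_p$ to the free subgroup $\langle g_1^2,g_2^2\rangle\subset PB_3$ is conjugate to a two-dimensional projective Burau representation $\beta_{-q}$ at a root of unity $q=q(p)$ depending on $p$ and on the colours. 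As soon as $-q$ is not a primitive root of unity of order in $\{1,2,3,4,6,10\}$, Corollary~\ref{inftriang} shows that $\Gamma_{-q}$ is an \emph{infinite} triangle group, which contains a free non-abelian subgroup — and in fact an explicit one, obtained by a ping-pong argument, see Theorems~\ref{free} and~\ref{free2}. So in each of the three cases it suffices to produce such an embedding and colouring realizing a parameter of good order.

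Cases~(1) and~(2) differ from the case $g\geq 4$ only in the way the free group $\langle g_1^2,g_2^2\rangle$ is embedded in $M_g$. For $g=3$ one can no longer embed $\Sigma_{0,4}$ with both $a$ and $b$ separating, but a holed sub-surface of $\Sigma_3$ carrying $\langle g_1^2,g_2^2\rangle$ still exists and, for a suitable admissible colouring of its boundary, the restriction of $\rho_p$ to $\langle g_1^2,g_2^2\rangle$ is again a Burau representation $\beta_{-q}$ at the parameter $q=q(p)$ used in the large-genus case; since $p\notin\{3,4,8,12,16,24,40\}$ the order of $-q$ avoids $\{1,2,3,4,6,10\}$, so $\Gamma_{-q}$ is an infinite triangle group and the conclusion follows from the previous paragraph. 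For $g=2$ the same argument works provided $p$ is even; when $p$ is odd the surface $\Sigma_2$ is simply too small — in the $SO(3)$-theory all admissible colours are even, and no sub-surface of $\Sigma_2$ carries the required two-dimensional Burau representation of $\langle g_1^2,g_2^2\rangle$ with a good parameter — which is why that sub-case is omitted from the statement.

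Case~(3), $p=40$ with $g\geq 4$, is the main obstacle. With the standard colouring (the colour $1$ on $\partial\Sigma_{0,4}$) the parameter $-q(40)$ is a primitive $10$-th root of unity, which is exactly one of the orders excluded in Corollary~\ref{inftriang}, and for it the Burau image is finite (the golden-ratio/icosahedral case); this is precisely why $p=40$ had to be discarded in Proposition~\ref{Johnsonfree}. To circumvent this we use the extra topology available when $g\geq 4$: one embeds the four-holed sphere (or a slightly larger holed sphere) with a different admissible colouring of its boundary, chosen so that the effective evaluation parameter becomes a root of unity of order not in $\{1,2,3,4,6,10\}$ — for instance a primitive $5$-th, $8$-th, $20$-th or $40$-th root of unity. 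For such a parameter the image is again an infinite triangle group $\Delta(k,k,k)$ with $k\geq 4$ by Proposition~\ref{even}, hence contains a free non-abelian subgroup. The delicate part, and the main obstacle of the whole proof, is the bookkeeping needed here: one has to check that for $g\geq 4$ there is an admissible colouring of the boundary of the embedded holed sphere for which the restriction of $\rho_p$ to $\langle g_1^2,g_2^2\rangle$ is still a genuine two-dimensional Burau representation $\beta_{-q}$ — not a smaller or decomposable piece — and for which the order of $-q$ avoids $\{1,2,3,4,6,10\}$; the hypothesis $g\geq 4$ is used exactly to ensure that the complementary sub-surface $\Sigma_g\setminus\Sigma'$ admits such a colouring, equivalently that the corresponding space of conformal blocks of $\Sigma_g$ is non-zero. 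Granting this single numerical verification, Corollary~\ref{inftriang} together with the fact that infinite triangle groups contain explicit free non-abelian subgroups settles all three cases at once.
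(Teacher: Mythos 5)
Your treatment of cases (1) and (2) matches the paper: one re-embeds $\Sigma_{0,4}$ in $\Sigma_2$ (resp.\ $\Sigma_3$) so that $\langle g_1^2,g_2^2\rangle$ still injects into $M_g$, picks an admissible boundary colouring --- $(1,1,1,1)$ for $g=2$ and even $p$, $(2,2,2,4)$ for $g=3$ and odd $p\geq 7$ --- for which the restriction of $\rho_p$ is again $\beta_{-q_p}$, and concludes via Proposition~\ref{jolarge}. That part is fine.

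Case (3), however, contains a genuine gap, and it is exactly the step you defer as ``a single numerical verification.'' You propose to keep a two-dimensional Burau sub-representation of $B_3$ on a four-holed sphere but to change the boundary colours so that the effective parameter $-q$ acquires an order outside $\{1,2,3,4,6,10\}$. You give no colouring that achieves this, and there is good reason to believe none exists: for $p=40$ the parameter of every two-dimensional Burau-type block on $\Sigma_{0,4}$ is forced by $A_{40}$ into the excluded (icosahedral, finite-image) range --- this is precisely why $40$ appears in the exceptional list of Proposition~\ref{Johnsonfree} in the first place, and why the paper abandons $B_3$ altogether at this value. The paper's actual argument for $p=40$ is structurally different: it embeds $PB_4$ via $M_{0,5}\to M_g$, restricts $\rho_p$ to the \emph{three}-dimensional space of conformal blocks with boundary labels $(1,1,1,1,2)$, identifies this with the Jones representation of $B_4$ at a $10$-th root of unity, and then invokes the Freedman--Larsen--Wang/Kuperberg theorem that this image is Zariski dense in $SL(3,\C)$, combined with the Tits alternative, to extract a free non-abelian subgroup. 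Without either carrying out your claimed recolouring explicitly (and verifying both that the block is a genuine irreducible two-dimensional Burau representation and that the order of $-q$ is good) or importing a density-plus-Tits argument of this kind, case (3) is not established.
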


\vspace{0.2cm}\noindent 
Then Propositions \ref{Johnsonfree} and \ref{excases} above will 
prove Theorem \ref{Johnson}.

\subsubsection{Proof of Proposition \ref{cseries}}\label{pcs}
Choose the base point $*$ for the fundamental group $\pi_1(\Sigma_g)$ 
on the circle $c_4$ that separates the sub-surfaces 
$\Sigma_{3,1}$  and $\Sigma_{g-3,1}$. 
Let $\varphi$ be a homeomorphism of $\Sigma_{0,4}$ that is identity 
on the boundary and whose mapping class $b$ belongs to $PB_3\subset M_{0,4}$. 
Consider its extension $\widetilde{\varphi}$ to $\Sigma_g$ 
by identity outside $\Sigma_{0,4}$. Its mapping class $B$ in $M_g$ 
is the image of $b$ in $M_g$.   

\vspace{0.2cm}\noindent 
In order to understand the action of $B$ on $\pi_1(\Sigma_g)$ 
we introduce three kinds of loops based at $*$: 
\begin{enumerate}
\item Loops of type I are those included in $\Sigma_{g-3,1}$. 
\item Loops of type II are those contained in $\Sigma_{0,4}$. 
\item Begin by fixing three simple arcs $\lambda_1,\lambda_2,\lambda_3$ 
embedded in $\Sigma_{0,4}$ joining $*$ to the three other 
boundary components $c_1,c_2$ and $c_3$,  respectively.  
Loops of type III are of the form 
$\lambda_i^{-1}x\lambda_i$, where $x$ is some loop based at the endpoint 
of $\lambda_i$ and contained in the 1-holed torus bounded by $c_i$. 
Thus loops of type III generate $\pi_1(\Sigma_{3,1},*)$.  
\end{enumerate}

\vspace{0.2cm}\noindent 
Now, the action of $B$ on the homotopy classes of loops of type I is trivial. 
The action of $B$ on the homotopy classes of loops of type II is 
completely described by the action of $b\in PB_3$ on $\pi_1(\Sigma_{0,4},*)$. 
Specifically, let $A:B_3\to {\rm Aut}({\mathbb F}_3)$ 
be the Artin representation (see \cite{Birman}). Here ${\mathbb F}_3$ is 
the free group on three generators $x_1,x_2,x_3$ which is identified with 
the fundamental group of the 3-holed disk $\Sigma_{0,4}$.  
\begin{lemma}
If $b\in \left(PB_3\right)_{(k)}$, then $A(b)(x_i)=l_i(b)^{-1}x_i l_i(b)$, where 
$l_i(b)\in \left({\mathbb F}_3\right)_{(k)}$. 
\end{lemma}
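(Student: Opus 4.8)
The plan is to argue by induction on $k$. For $k=1$ there is nothing to prove beyond the classical fact that a pure braid acts on ${\mathbb F}_3$ by a \emph{basis-conjugating} automorphism: $PB_3=\ker(B_3\to S_3)$, and the Artin action permutes the conjugacy classes of $x_1,x_2,x_3$ according to the underlying permutation, so for $b\in PB_3$ we have $A(b)(x_i)=l_i(b)^{-1}x_il_i(b)$ with $l_i(b)\in{\mathbb F}_3=({\mathbb F}_3)_{(1)}$, the element $l_i(b)$ being determined modulo the centralizer $C_{{\mathbb F}_3}(x_i)=\langle x_i\rangle$. Throughout I will use two elementary facts. (i) $PB_3$ acts trivially on ${\mathbb F}_3^{\rm ab}\cong\Z^3$ (a conjugate of $x_i$ is homologous to $x_i$), hence it acts as the identity on the whole associated graded Lie ring $\bigoplus_{j\geq1}({\mathbb F}_3)_{(j)}/({\mathbb F}_3)_{(j+1)}$, so that $A(b)(w)\equiv w$ and $uwu^{-1}\equiv w \pmod{({\mathbb F}_3)_{(j+1)}}$ for every $b\in PB_3$, $u\in{\mathbb F}_3$ and $w\in({\mathbb F}_3)_{(j)}$. (ii) $\langle x_i\rangle\cap({\mathbb F}_3)_{(j)}=\{1\}$ for $j\geq2$, so as soon as $l_i(b)$ can be chosen inside $({\mathbb F}_3)_{(j)}$ with $j\geq 2$ this choice is canonical (indeed unique) modulo $({\mathbb F}_3)_{(j+1)}$.

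For the inductive step, suppose the statement holds for $k$ and fix $i$. Given $b\in(PB_3)_{(k+1)}\subseteq(PB_3)_{(k)}$, the inductive hypothesis already yields $l_i(b)\in({\mathbb F}_3)_{(k)}$, and I must improve this to $({\mathbb F}_3)_{(k+1)}$. The idea is to introduce the ``defect'' map
\[
\tau_i\colon (PB_3)_{(k)}\longrightarrow ({\mathbb F}_3)_{(k)}/({\mathbb F}_3)_{(k+1)},\qquad \tau_i(b)=\big[\,l_i(b)\,\big]
\]
(for $k=1$ one replaces the target by $\Z^3/\Z e_i$, which is the only trace left of the indeterminacy $\langle x_i\rangle$), and to show that $\tau_i$ is a group homomorphism which vanishes on $(PB_3)_{(k+1)}$. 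Writing out $A(bb')=A(b)A(b')$ gives the cocycle identity $l_i(bb')\equiv l_i(b)\,A(b)(l_i(b'))\pmod{\langle x_i\rangle}$; since $l_i(b')\in({\mathbb F}_3)_{(k)}$, fact (i) turns this into $l_i(bb')\equiv l_i(b)\,l_i(b')\pmod{\langle x_i\rangle ({\mathbb F}_3)_{(k+1)}}$, so $\tau_i(bb')=\tau_i(b)+\tau_i(b')$. Finally, $(PB_3)_{(k+1)}=[PB_3,(PB_3)_{(k)}]$ is generated by the commutators $[c,d]$ with $c\in PB_3$ and $d\in(PB_3)_{(k)}$, so it is enough to see that $\tau_i([c,d])=0$, i.e. that $\tau_i(cdc^{-1})=\tau_i(d)$ (note $cdc^{-1}\in(PB_3)_{(k)}$); expanding $l_i(cdc^{-1})$ with the cocycle identity and applying fact (i) to $l_i(d)\in({\mathbb F}_3)_{(k)}$, every contribution of $l_i(c)$ disappears modulo $({\mathbb F}_3)_{(k+1)}$ and one is left with $l_i(cdc^{-1})\equiv l_i(d)\pmod{\langle x_i\rangle({\mathbb F}_3)_{(k+1)}}$. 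Hence $\tau_i$ kills $(PB_3)_{(k+1)}$, which is exactly the assertion that $l_i(b)\in({\mathbb F}_3)_{(k+1)}$ for every $b\in(PB_3)_{(k+1)}$, and the induction closes.

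I expect the only genuine obstacle to be the bookkeeping of the indeterminacy $\langle x_i\rangle$: one must check that at each level the representative $l_i(b)$ is taken to be the canonical lift into the lowest term of the lower central series of ${\mathbb F}_3$ known to contain it, so that $\tau_i$ is really well defined and additive, and one must treat the initial step $k=1$ slightly apart (there the target of $\tau_i$ is $\Z^3/\Z e_i$ rather than an honest graded piece, and one invokes instead that a homomorphism into an abelian group kills commutators). Once fact (i) — triviality of the $PB_3$-action on the graded Lie ring of ${\mathbb F}_3$ — is recorded, the homomorphism property of $\tau_i$ and the commutator cancellation are short formal manipulations, so the weight of the argument really lies in setting up these conventions correctly.
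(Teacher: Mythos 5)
Your proof is correct, but it takes a genuinely different route from the paper's. The paper disposes of the lemma by quoting link-theoretic machinery: it invokes the filtration $PB_{n,k}$ of pure braids whose closures have vanishing Milnor invariants of length $\le k$, the inclusion $[PB_{n,k},PB_{n,m}]\subset PB_{n,k+m}$ (cited from Ohkawa), hence $(PB_n)_{(k)}\subset PB_{n,k}$, and then the identification of Milnor invariants with the Magnus coefficients of the longitudes to translate $b\in PB_{n,k}$ into $l_i(b)\in({\mathbb F}_n)_{(k)}$. You instead run a self-contained induction whose engine is the map $\tau_i$, which is nothing but the $k$-th Johnson homomorphism of the pure braid group in disguise; the two key inputs --- that $PB_3$ acts trivially on the associated graded Lie ring of ${\mathbb F}_3$ because it acts trivially on $H_1$, and that $\langle x_i\rangle\cap({\mathbb F}_3)_{(j)}=\{1\}$ for $j\ge 2$ so the longitude has a canonical representative in degree $\ge 2$ --- are both correct and suffice to make $\tau_i$ well defined, additive, and vanishing on commutators $[c,d]$ with $c\in PB_3$, $d\in(PB_3)_{(k)}$ (which do generate $(PB_3)_{(k+1)}$ as a subgroup, since the generating set is conjugation-invariant). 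The stray power of $x_i$ produced when you expand $l_i(cdc^{-1})$ via the cocycle identity is genuinely there, but it is absorbed by the left-$\langle x_i\rangle$ indeterminacy after conjugating it past $l_i(d)$, which costs only an error in $({\mathbb F}_3)_{(k+1)}$; so your ``bookkeeping'' caveat is exactly the right thing to worry about and it does close. What each approach buys: the paper's argument is two lines long but outsources the content to Milnor's theory (and implicitly proves the statement for all $n$); yours is longer and requires care with the centralizer indeterminacy, but is elementary, explicit about where the triviality of the graded action is used, and also works verbatim for $PB_n$.
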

\begin{proof}
This is folklore. Moreover, the statement is valid for any number 
$n$ of strands instead of 3. Here is a short proof avoiding heavy computations. 
It is known that the set $PB_{n,k}$ of those pure braids $b$ 
for which the length $m$  Milnor invariants of their  Artin closures 
vanish for all $m\leq k$  is a normal subgroup $PB_{n,k}$ of $B_n$. 
Furthermore, the central series of subgroups $PB_{n,k}$ 
verifies the following (see e.g. \cite{O}): 
\[ [PB_{n,k},PB_{n,m}]\subset PB_{n,k+m}, \, \mbox{\rm for all }\, n,k,m, \] 
and hence, we have $\left(PB_n\right)_{(k)}\subset PB_{n,k}$. 

\vspace{0.2cm}\noindent 
Now,  if $b$ is a pure braid, then  
$A(b)(x_i)=l_i(b)^{-1}x_i l_i(b)$, where $l_i(b)$ is the so-called 
{\em longitude}  of the $i$-th strand. 
%The longitude is uniquely determined by its linking 
%number with the strand, for instance if we ask that the total 
%exponent of $x_i$ into $l_i(b)$ to be zero. 
Next we can interpret Milnor invariants as coefficients of the 
Magnus expansion of the longitudes. In particular, this correspondence shows 
that $b\in PB_{n,k}$ if and only if $l_i(b)\in \left({\mathbb F}_{n}\right)_{(k)}$. 
This proves the claim.  
\end{proof}

\vspace{0.2cm}\noindent 
The action of $B$ on the homotopy classes of loops of type III can be described 
in a similar way. Let a homotopy class  $a$ of this kind be represented by 
a loop $\lambda_i^{-1}x\lambda_i$. 
Then $\lambda_i^{-1}\varphi(\lambda_i)$ is a loop 
contained in $\Sigma_{0,4}$, whose  homotopy class $\eta_i=\eta_i(b)$ 
depends only on $b$ and 
$\lambda_i$. Then it is easy to see that 
\[ B(a)=\eta_i^{-1} a \eta_i.\]
Let now $y_{i}, z_{i}$ be standard homotopy classes of 
loops based at a point of $c_i$ which generate the fundamental 
group of the holed torus bounded by $c_i$, so that 
$\{y_1,z_1,y_2,z_2,y_3,z_3\}$ is a generator system for 
$\pi_1(\Sigma_{3,1},*)$, 
which is the free group ${\mathbb F}_6$ of rank 6. 
\begin{lemma}
If $b\in \left(PB_3\right)_{(k)}$, then $\eta_i(b)\in 
\left({\mathbb F}_{6}\right)_{(2k)}$. 
\end{lemma}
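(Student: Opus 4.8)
The plan is to make $\eta_i(b)$ explicit in terms of the $i$-th longitude $l_i(b)$ of the pure braid $b$, to invoke the previous lemma in order to place $l_i(b)$ inside the lower central series of $\pi_1(\Sigma_{0,4})$, and then to exploit that the subsurface inclusion $\Sigma_{0,4}\hookrightarrow\Sigma_{3,1}$ \emph{doubles} the depth in the lower central series, simply because each hole $c_j$ of $\Sigma_{0,4}$ becomes a commutator $[y_j,z_j]$ (up to conjugacy) in $\pi_1(\Sigma_{3,1})$. The one delicate step is the first: identifying $\eta_i(b)$ with $l_i(b)$ up to a meridian power requires careful base-point bookkeeping, and that meridian power is exactly what will force a separate (trivial) treatment of the case $k=1$. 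Steps two and three are then routine lower-central-series manipulations built on top of the already established fact that $l_i(b)\in({\mathbb F}_3)_{(k)}$.

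\emph{Step 1.} Represent the loop $x_i\in\pi_1(\Sigma_{0,4},*)$ surrounding $c_i$ by $\lambda_i\delta_i\lambda_i^{-1}$, where $\delta_i$ is a small loop encircling $c_i$ at the endpoint of $\lambda_i$. On one hand, the action of $B$ on this type II loop is, by hypothesis, the Artin action: $B(x_i)=A(b)(x_i)=l_i(b)^{-1}x_i\,l_i(b)$, with $l_i(b)$ the longitude of the previous lemma. On the other hand, since $\varphi$ is the identity near $\partial\Sigma_{0,4}$ it fixes $\delta_i$, while the same arc-dragging computation that produced the formula $B(a)=\eta_i^{-1}a\eta_i$ for type III loops gives $B(x_i)=\eta_i(b)^{\pm1}\,x_i\,\eta_i(b)^{\mp1}$ (the sign being immaterial below). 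Equating the two expressions shows that $l_i(b)\,\eta_i(b)^{\pm1}$ centralizes $x_i$; as $x_i$ is a primitive element of the free group $\pi_1(\Sigma_{0,4})$, its centralizer is $\langle x_i\rangle$, so
\[
\eta_i(b)=x_i^{\,m_i(b)}\,l_i(b)^{\pm1}\qquad\text{in }\pi_1(\Sigma_{0,4},*)\cong{\mathbb F}_3
\]
for a suitable integer $m_i(b)$.

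\emph{Step 2.} Both $b\mapsto l_i(b)$ and $b\mapsto\eta_i(b)$ obey the evident cocycle identities coming from composition of mapping classes. Reducing these identities modulo $(\pi_1(\Sigma_{0,4}))_{(2)}$ and using that every pure braid acts trivially on $H_1(\Sigma_{0,4})$ turns them into genuine homomorphisms $PB_3\to H_1(\Sigma_{0,4})\cong\Z^3$; hence $b\mapsto m_i(b)$, which is the $x_i$-coordinate of their difference, is a homomorphism $PB_3\to\Z$ and so vanishes on $(PB_3)_{(2)}$. Therefore, if $b\in(PB_3)_{(k)}$ with $k\ge 2$ then $\eta_i(b)=l_i(b)^{\pm1}$, while for $k=1$ we only record the trivial fact $\eta_i(b)\in\pi_1(\Sigma_{0,4})=(\pi_1(\Sigma_{0,4}))_{(1)}$. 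In either case, since the previous lemma gives $l_i(b)\in({\mathbb F}_3)_{(k)}$ and the lower central terms are normal, we obtain
\[
\eta_i(b)\in(\pi_1(\Sigma_{0,4}))_{(k)}\qquad\text{for every }b\in(PB_3)_{(k)}.
\]

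\emph{Step 3.} The inclusion $\Sigma_{0,4}\hookrightarrow\Sigma_{3,1}$ is $\pi_1$-injective, and it carries each generator $x_j$ to a conjugate of $[y_j,z_j]^{\pm1}$, because $c_j$ bounds the $j$-th one-holed torus inside $\Sigma_{3,1}$. Thus the image of $\pi_1(\Sigma_{0,4})$ lies in $({\mathbb F}_6)_{(2)}$, where ${\mathbb F}_6=\pi_1(\Sigma_{3,1},*)$, and a one-line induction using $[({\mathbb F}_6)_{(a)},({\mathbb F}_6)_{(b)}]\subset({\mathbb F}_6)_{(a+b)}$ shows that this inclusion sends $(\pi_1(\Sigma_{0,4}))_{(k)}$ into $({\mathbb F}_6)_{(2k)}$. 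Combining this with Step 2 yields $\eta_i(b)\in({\mathbb F}_6)_{(2k)}$ whenever $b\in(PB_3)_{(k)}$, which is the assertion of the lemma. The main obstacle, as indicated, is Step 1, where one must match the dragged arc $\varphi(\lambda_i)$ and the induced inner automorphism with the Artin longitude precisely enough to isolate the exponent $m_i(b)$.
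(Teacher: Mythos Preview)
Your proof is correct and follows essentially the same route as the paper: identify $\eta_i(b)$ with the longitude $l_i(b)$, invoke the previous lemma to place it in $({\mathbb F}_3)_{(k)}$, and then push through the inclusion $x_j\mapsto [y_j,z_j]$ to double the lower-central depth. The only difference is that the paper simply asserts the exact identification $\eta_i(b)=\eta(l_i(b))$ (with $\eta:{\mathbb F}_3\to{\mathbb F}_6$, $x_j\mapsto[y_j,z_j]$), whereas you allow an a priori meridian ambiguity $x_i^{m_i(b)}$ and then kill it for $k\ge 2$ via the cocycle-to-homomorphism argument; this extra bookkeeping is careful but not strictly needed once the arcs $\lambda_i$ and the normalization of the longitudes are chosen compatibly.
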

\begin{proof}
It suffices to observe that 
$\eta_i(b)$ is actually the $i$-th longitude $l_i(b)$ of the braid $b$, 
expressed now in the generators $y_i,z_i$ instead of the generators $x_i$. 
We also know that $x_i=[y_i,z_i]$. Let then 
$\eta:{\mathbb F}_3\to {\mathbb F}_6$ be the group homomorphism 
given on the generators by $\eta(x_i)=[y_i,z_i]$. Then 
$\eta_i(b)=\eta(l_i(b))$. 
Eventually, if $l_i(b)\in \left({\mathbb F}_3\right)_{(k)}$, then 
$\eta(l_i(b))\in \left({\mathbb F}_6\right)_{(2k)}$ and the claim follows. 
\end{proof}

\vspace{0.2cm}
\noindent 
Therefore the class $B$ belongs to $I_g(k)$, since its action on 
every generator of $\pi_1(\Sigma_g,*)$ is a conjugation by an element of 
$\pi_1(\Sigma_g,*)_{(k)}$.

\subsubsection{Proof of Proposition \ref{jolarge}}\label{jl} 
First we want to identify some sub-representation of the 
restriction of $\rho_p$ to $PB_3\subset M_g$. Specifically we have:  

\begin{lemma}\label{contain}
Let $p\geq 5$. The restriction of the quantum representation $\rho_p$ at 
$PB_3\subset M_{0,4}$ has an invariant 2-dimensional 
subspace such that the corresponding  sub-representation 
is equivalent to the Burau representation 
$\beta_{-q_p}$, where the root of unity $q_p$ is given by: 
\[ q_p=\left\{\begin{array}{ll}
-A_p^{-4}=-\exp\left(-\frac{8\pi i}{p}\right), & {\rm if}\: p\equiv 0({\rm mod}\: 4);\\
-A_5^{-4}=-\exp\left(-\frac{4\pi i}{5}\right) , & {\rm if}\: p=5;\\
-A_p^{-8}=-\exp\left(-\frac{8(p+1)\pi i}{p}\right) , & {\rm if}\: p\equiv 1({\rm mod}\: 2), p\geq 7.\\
\end{array}\right. \]
\end{lemma}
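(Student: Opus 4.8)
\textbf{Proof proposal for Lemma \ref{contain}.}

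The plan is to realize the $PB_3$-action on conformal blocks of $\Sigma_g$ via the gluing axioms of the BHMV TQFT, restricted to the $4$-holed sphere $\Sigma_{0,4}$ carrying a distinguished colouring on its boundary. First I would recall that, by the functoriality of $\mathcal V_p$ under gluing, the conformal block space of $\Sigma_g$ decomposes as a direct sum, over all admissible colourings $(i_1,i_2,i_3,j)$ of the four curves $c_1,c_2,c_3,c_4$, of a tensor product $\mathcal V_p(\Sigma_{0,4};i_1,i_2,i_3,j)\otimes(\text{block spaces of the complementary pieces})$. Since the mapping classes in $PB_3\subset M_{0,4}$ are supported inside $\Sigma_{0,4}$ and fix its boundary pointwise, they act as the identity on the complementary factors; hence it suffices to analyse the action on a single summand $\mathcal V_p(\Sigma_{0,4};i_1,i_2,i_3,j)$. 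The key point is to choose the boundary colours so that this space is exactly $2$-dimensional and the action is, up to the projective ambiguity, the reduced Burau representation of $B_3$ at the required root of unity.

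Next I would carry out the dimension and identification step. The space $\mathcal V_p(\Sigma_{0,4};i_1,i_2,i_3,j)$ has a basis indexed by the admissible colours $l$ of an internal curve, and the fusion rules force $l$ to lie between $|i_1-i_2|$ and $i_1+i_2$ (and to be admissible with $i_3,j$); choosing, say, $i_1=i_2=i_3=1$ (the first nontrivial colour) and $j$ appropriately produces a space of dimension $2$ for all $p\ge 5$. In this basis, the generator $g_1$ of $B_3$ acts by the $S$- and $T$-matrices of the relevant sub-theory on the two-strand sub-configuration, and $g_2$ acts by the braiding on the other pair; a direct comparison with the $2\times 2$ matrices $\beta_q(g_1)$, $\beta_q(g_2)$ of the reduced Burau representation shows that, after a change of basis and rescaling, the sub-representation is $\beta_{-q_p}$ for the scalar $q_p$ determined by the $T$-matrix eigenvalue $\mu_l=(-A_p)^{l(l+2)}$ of Remark \ref{order}. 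Tracking the exponents through the three cases $p\equiv 0\ (\mathrm{mod}\ 4)$, $p=5$, and $p\equiv 1\ (\mathrm{mod}\ 2)$ with $p\ge 7$ — where in the odd case the effective theory is $\mathcal V_p$ with $A_p$ of order $2p$, doubling the relevant exponent — yields precisely the formulas $q_p=-A_p^{-4}$, $-A_5^{-4}$, $-A_p^{-8}$ stated in the Lemma. This bookkeeping of roots of unity, and the correct normalization of the braiding eigenvalues so that the matrices land on the nose in Burau form, is the main obstacle; it is essentially the computation already done in \cite{F}, which I would invoke, verifying only that the parameter $q_p$ comes out as claimed.

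Finally I would address the special value $p=5$. Here $\mathcal V_5$ is the $SO(3)$-theory with admissible colours $\{0,2\}$ only, so the naive choice of boundary colours is not available and one instead uses the unique nontrivial admissible colour on the relevant curves; the resulting $2$-dimensional block still supports the Burau representation, but now at $q_5=-A_5^{-4}=-\exp(-4\pi i/5)$ rather than the value $-A_5^{-8}$ that the general odd formula would predict — the discrepancy is exactly the point of listing $p=5$ separately. I would close by remarking that in all cases the invariant $2$-plane and the equivalence are constructed explicitly, so the sub-representation is genuinely the reduced Burau representation (modulo scalars, as is forced by the projectivity of $\rho_p$), which is what the subsequent arguments — feeding into Propositions \ref{even}, \ref{odd} and Corollary \ref{inftriang} — require.
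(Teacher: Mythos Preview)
Your approach is essentially the paper's: isolate a 2-dimensional conformal block on $\Sigma_{0,4}$ with well-chosen boundary colours, and invoke the computation from \cite{F} to identify the $PB_3$-action with Burau at the stated parameter. The paper is simply more explicit about the labels --- $(1,1,1,1)$ for even $p$, $(2,2,2,2)$ for $p=5$, and $(4,2,2,2)$ for odd $p\geq 7$ --- and this last choice is where your write-up is slightly loose: for odd $p\geq 7$ the admissible colours are even, and taking the ``first nontrivial colour'' $2$ on all four boundary components yields a \emph{three}-dimensional block (internal label $l\in\{0,2,4\}$), not two; one boundary label must be bumped to $4$ to force $l\in\{2,4\}$. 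This is presumably what you mean by ``$j$ appropriately'', but it should be said, since it is also what makes the eigenvalue bookkeeping come out to $-A_p^{-8}$ rather than $-A_p^{-4}$.
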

\begin{proof}
For even $p$ this is the content of \cite{F}, Prop. 3.2. 
We recall that in this case the invariant 2-dimensional subspace 
is the the space of conformal blocks
associated to the surface $\Sigma_{0,4}$ with all boundary components  
being labeled by the color $1$. The odd case is similar. 
The invariant subspace is the space of conformal blocks 
associated to the surface $\Sigma_{0,4}$ with boundary labels  
$(2,2,2,2)$, when $p=5$ and $(4,2,2,2)$, when $p\geq 7$ 
respectively.    
The eigenvalues of the half-twist  can be computed as in \cite{F}. 
\end{proof}

\vspace{0.2cm}\noindent 
Thus the image $\rho_p(PB_3)$ of the quantum representation 
projects onto the image of the Burau representation 
$\beta_{-q_p}(PB_3)$. 

\vspace{0.2cm}\noindent   
Up to a Galois conjugacy  we can assume that 
$\beta_{-q_p}$ is unitarizable and  after rescaling, it 
takes values in $U(2)$. 
Consider the projection of $\beta_{-q_p}(\left(PB_3\right)_{(k)})$ 
into $U(2)/U(1)=SO(3)$.  

\vspace{0.2cm}\noindent 
A finitely generated subgroup of $SO(3)$ is either 
finite or abelian or else dense in $SO(3)$. If the group is 
dense in $SO(3)$, then it contains a free non-abelian subgroup. 
Moreover, solvable subgroups of $SU(2)$ (and hence of $SO(3)$) are 
abelian. The finite subgroups of $SO(3)$ are well-known. They are 
the following: cyclic groups, dihedral groups, tetrahedral group 
(automorphisms of the  regular tetrahedron), the octahedral group 
(the group of automorphisms of the regular octahedron) 
and the icosahedral group (the group of automorphisms of the 
regular icosahedron or dodecahedron). All but the last one  
are actually solvable groups. The icosahedral group is isomorphic to 
the alternating group $A_5$ and it is well-known that it is simple 
(and thus non-solvable). As a side remark this group appeared in relation 
with the non-solvability of the quintic equation in Felix Klein's monograph
\cite{Klein}.

\begin{lemma}\label{nonsol}
If  $q$ is not a primitive root of unity of order in the set $\{1,2,3,4,6,10\}$,
then $\left(\Gamma_q\right)_{(k)}$ is non-solvable and thus non-abelian 
for any $k$. Moreover, $\left(\Gamma_q\right)_{(k)}$ cannot be $A_5$, 
for any $k$. 
\end{lemma}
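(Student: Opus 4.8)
The plan is to leverage the explicit description of $\Gamma_q$ as an infinite triangle group, which is available by Corollary \ref{inftriang} precisely when $q$ is not a primitive root of unity of order in $\{1,2,3,4,6,10\}$. So fix such a $q$ and set $\Gamma = \Gamma_q$. By Propositions \ref{even} and \ref{odd}, $\Gamma$ is a hyperbolic triangle group $\Delta(m,n,p)$ (or a closely related group) with $\tfrac1m+\tfrac1n+\tfrac1p<1$, hence it acts on the hyperbolic plane as a lattice in $PSL(2,\mathbb{R})$ and in particular is a non-elementary word-hyperbolic group. The first step is to recall the standard fact that such a $\Gamma$ is non-solvable: it contains a non-abelian free subgroup (by the Tits alternative, or more elementarily because a hyperbolic triangle group surjects onto a non-abelian free group after passing to a suitable finite-index surface subgroup, or simply because its abelianization is finite while the group is infinite and residually finite, so the derived series cannot terminate). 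Actually the cleanest route: $\Gamma$ is infinite, finitely generated, linear, and not virtually solvable (a virtually solvable subgroup of $PSL(2,\mathbb{R})$ is virtually abelian, hence cannot be a cocompact lattice), so by the Tits alternative $\Gamma$ contains $\mathbb{F}_2$.

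The second step handles the lower central series. Since $\mathbb{F}_2 \leq \Gamma$, it suffices to show that $\Gamma_{(k)}$ is non-solvable for every $k$. I would argue as follows: if some $\Gamma_{(k)}$ were solvable, then since $\Gamma/\Gamma_{(k)}$ is nilpotent (hence solvable), $\Gamma$ itself would be solvable, contradicting the previous step. Therefore every term of the lower central series is non-solvable, and in particular non-abelian.

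The third step rules out the exceptional isomorphism $\Gamma_{(k)} \cong A_5$. Here I would use that $A_5$ is finite, so it would be enough to show $\Gamma_{(k)}$ is infinite for all $k$. This follows because $\Gamma$ is a residually-$p$ or at least residually nilpotent... no — more directly: a hyperbolic triangle group $\Gamma$ has finite abelianization, so $\Gamma_{(1)}/\Gamma_{(2)}$ is finite, but the terms $\Gamma_{(k)}$ cannot all become trivial or finite: if $\Gamma_{(k)}$ were finite for some $k$, then $\Gamma$ would be virtually nilpotent, hence virtually solvable, again contradicting Step 1. Alternatively, since $\Gamma$ is a lattice in $PSL(2,\mathbb{R})$ and therefore (being non-elementary hyperbolic) has trivial amenable radical and every infinite normal subgroup is of finite index — and $\Gamma_{(k)}$ is normal and infinite because $\Gamma/\Gamma_{(k)}$ is nilpotent while $\Gamma$ is not virtually nilpotent — we conclude $\Gamma_{(k)}$ has finite index in $\Gamma$, hence is itself an infinite hyperbolic group, so in particular not isomorphic to the finite group $A_5$.

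The main obstacle I anticipate is purely bookkeeping: matching the case analysis of Propositions \ref{even} and \ref{odd} (the even order $2k$ case giving $\Delta(k,k,k)$, the odd order case giving $\Delta(2,3,2k+1)$, and the small exceptional orders $\{1,2,3,4,6,10\}$ that must be excluded) and checking in each case that the hyperbolicity inequality $\tfrac1m+\tfrac1n+\tfrac1p<1$ genuinely holds — e.g. $\Delta(k,k,k)$ is hyperbolic iff $k\geq 4$, which is exactly the range left after discarding orders $\leq 6$, and $\Delta(2,3,2k+1)$ is hyperbolic iff $2k+1\geq 7$, which discards the order-$10$ case corresponding to $2k+1 = 5$. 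Once these numerics are in place, everything else is soft group theory. I would also remark that the "moreover" clause about $A_5$ is recorded separately because the eventual application in Lemma \ref{contain}'s setting projects into $SO(3)$, where $A_5$ is the unique finite subgroup that is non-solvable, so the two assertions of the lemma together exactly exclude all finite possibilities and force density in $SO(3)$.
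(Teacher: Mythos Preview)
Your argument is correct and, for the non-solvability assertion, essentially identical to the paper's: both observe that $\Gamma_q/(\Gamma_q)_{(k)}$ is nilpotent, so solvability of $(\Gamma_q)_{(k)}$ would force $\Gamma_q$ itself to be solvable, which contradicts its being an infinite triangle group (Corollary~\ref{inftriang}). The paper does not bother with the Tits alternative here; it simply asserts that an infinite triangle group is non-solvable.

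For the exclusion of $A_5$, however, you take a genuinely different route. The paper argues constructively: an infinite triangle group contains a finite-index surface subgroup $S$ of genus $\geq 2$; since $S_{(k)}\subset (\Gamma_q)_{(k)}$ and every term of the lower central series of such a surface group is infinite, each $(\Gamma_q)_{(k)}$ is infinite and hence not $A_5$. Your argument is instead by contradiction on virtual nilpotence: if $(\Gamma_q)_{(k)}$ were finite then $\Gamma_q$ would be finite-by-nilpotent, hence virtually nilpotent, contradicting the first step. Both arguments are sound. The paper's version is more elementary and self-contained (no appeal to Tits or to structural facts about lattices), and it dovetails with the surface-group viewpoint used elsewhere in the paper; your version is shorter once one accepts the ambient machinery, and your alternative via the normal-subgroup dichotomy for Fuchsian groups is also valid. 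Your bookkeeping remarks about the hyperbolicity thresholds ($k\geq 4$ for $\Delta(k,k,k)$, $2k+1\geq 7$ for $\Delta(2,3,2k+1)$) are accurate and explain precisely why the orders $\{1,2,3,4,6,10\}$ are excluded.
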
 
\begin{proof}
If $\left(\Gamma_q\right)_{(k)}$ were solvable,  then 
$\Gamma_q$ would be solvable. 
But one knows that $\Gamma_q$ is not solvable. 
In fact if $q$ is as above, then $\Gamma_{q}$ 
is an infinite triangle group by Corollary \ref{inftriang}. 

\vspace{0.2cm}\noindent 
Now any infinite triangle group has a finite index subgroup which is  
a surface group of genus  at least 2. Therefore, each term of the lower 
central series of that surface group embeds into the corresponding 
term of the lower central series of $\Gamma_q$, so that the 
later is non-trivial.  Since the lower central series of a surface 
group of genus at least 2 consists only of infinite groups it follows 
that no term can be isomorphic to the finite group $A_5$ either.  
\end{proof}

\vspace{0.2cm}\noindent 
Lemma \ref{nonsol} shows that whenever $p$ is 
as in the statement of Proposition \ref{jolarge}, 
the group $\beta_{-q_p}(\left(\langle g_1^2,g_2^2\rangle\right)_{(k)})$ 
is neither finite nor abelian, so that it is dense in $SO(3)$ and hence 
it contains a free non-abelian group. This proves 
Proposition \ref{jolarge}.

\subsubsection{Explicit free subgroups}
The main interest of the elementary arguments in the proof 
presented above is that the 
free non-abelian subgroups in the image are abundant and 
explicit. For instance we have: 

\begin{theorem}\label{free}
Assume that $g\geq 4$, $p\not\in\{3, 4, 12, 16\}$ 
and $p\not\equiv 8({\rm mod}\: 16)$.    
Set $x=\rho_p([g_1^2,g_2^2])$ and $y=\rho_p([g_1^4, g_2^2])$. 
Then the group generated by the iterated commutators 
$[x,[x,[x,\ldots,[x,y]]\ldots]$ and 
$[y,[x,[x,\ldots,[x,y]]\ldots]$ of length $k\geq 3$ 
is a free non-abelian subgroup of 
$\rho_p(I_g(k))$. 
\end{theorem}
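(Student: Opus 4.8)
The plan is to reduce the assertion to a single geometric statement about two explicit matrices, treating everything else as bookkeeping. For membership in $\rho_p(I_g(k))$: the elements $x=\rho_p([g_1^2,g_2^2])$ and $y=\rho_p([g_1^4,g_2^2])$ are images under $\rho_p$ of elements of the free subgroup $\langle g_1^2,g_2^2\rangle\subset PB_3$, and any iterated commutator of length $k$ formed from two elements of a group $G$ lies in $G_{(k)}$; taking $G=\langle g_1^2,g_2^2\rangle\subset PB_3$, the two commutators in the statement are images under $\rho_p$ of elements of $(PB_3)_{(k)}$. Since $g\geq 4$, Proposition~\ref{cseries} places the corresponding mapping classes in $I_g(k)$, so the subgroup they generate lies in $\rho_p(I_g(k))$; it remains to prove that it is free non-abelian.

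The second step passes to Burau's representation. By Lemma~\ref{contain} the restriction of $\rho_p$ to $PB_3$ carries an invariant $2$-plane on which it is equivalent to $\beta_{-q_p}$, so there is a surjection $\rho_p(PB_3)\twoheadrightarrow\beta_{-q_p}(PB_3)\subset PGL(2,\C)$ sending $x$ to $[A,B]$ and $y$ to $[A^2,B]$, with $A=\beta_{-q_p}(g_1^2)$ and $B=\beta_{-q_p}(g_2^2)$ the explicit matrices modulo scalars. Write $\overline X,\overline Y$ for the images of the two length-$k$ commutators. It suffices to prove that $\overline X$ and $\overline Y$ freely generate a free group of rank two: then in the composite $F_2\twoheadrightarrow\langle X,Y\rangle\twoheadrightarrow\langle\overline X,\overline Y\rangle$ a free basis of $F_2$ is sent to a free basis, so the composite, and hence each factor, is an isomorphism, and $\langle X,Y\rangle\cong F_2$. (As a sanity check, the corresponding elements already freely generate a rank-two subgroup of the \emph{free} group $\langle g_1^2,g_2^2\rangle$: every subgroup of a free group is free, and the two length-$k$ commutators do not commute there because in the free Lie algebra on $x,y$ the brackets $[x,[x,\dots,[x,y]\dots]]$, with $k-1$ letters $x$, and $[y,[x,\dots,[x,y]\dots]]$, with $k-2$ letters $x$, are nonzero and lie in distinct multidegrees for $k\geq 3$, hence are linearly independent. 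Thus the only thing $\beta_{-q_p}$ could spoil is to collapse these two elements onto a common cyclic subgroup.)

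The substantive point is that $\overline X$ and $\overline Y$ still freely generate a rank-two free group inside $\Gamma_{-q_p}$. Under the stated hypotheses the root of unity $-q_p$ has order outside $\{1,2,3,4,6,10\}$, so $\Gamma_{-q_p}$ is an infinite triangle group (Corollary~\ref{inftriang}); by Propositions~\ref{even} and~\ref{odd} it is explicitly a Fuchsian triangle group, and under this identification $A$ and $B$ become the two elliptic rotations about vertices of a geodesic triangle, so $\overline X$, $\overline Y$ and all their products are explicit isometries of $\mathbb H^2$ with computable fixed points and traces. I would complete the proof by a ping--pong argument of Klein type: verify that $\overline X$ and $\overline Y$ are hyperbolic and exhibit disjoint subsets $U,V\subset\partial\mathbb H^2$ with $\overline X^{\,n}V\subset U$ and $\overline Y^{\,n}U\subset V$ for every $n\neq 0$; this yields at once that $\langle\overline X,\overline Y\rangle$ is free of rank two — in particular non-abelian, with the explicit generators asserted — and transporting the conclusion back through the two reductions finishes the proof. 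The main obstacle is exactly this verification: producing the ping--pong data, equivalently ruling out that the pair $[A,B]$, $[A^2,B]$ and its iterated commutators degenerate (no common fixed point, no accidental relation, no torsion). This is where soft arguments no longer suffice and where the excluded $p$ come in — for $p\in\{3,4,12,16\}$ because $\Gamma_{-q_p}$ is then finite or solvable, and for $p\equiv 8\,(\mathrm{mod}\ 16)$ because $q_p$ then has odd order and the particular pair $[A,B]$, $[A^2,B]$ fails to lie in ping--pong position.
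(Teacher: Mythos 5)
Your reductions are sound and match the paper's: membership of the iterated commutators in $\rho_p(I_g(k))$ via Proposition~\ref{cseries}, passage to the Burau sub-representation via Lemma~\ref{contain}, and the observation that it suffices to establish freeness of the image in $\Gamma_{-q_p}$. You also correctly isolate where the real content lies. But that is exactly the step you do not carry out: you announce ``I would complete the proof by a ping--pong argument'' and then concede that ``the main obstacle is exactly this verification.'' Producing ping--pong data for $[A,B]$ and $[A^2,B]$ (or worse, for their length-$k$ iterated commutators, which is where you aim the argument) inside an arbitrary infinite triangle group $\Delta(o(p),o(p),o(p))$, uniformly in $p$, is not routine bookkeeping --- it is the theorem. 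As written, the proposal is a correct framing plus an unproved core claim, so it has a genuine gap. A secondary inefficiency: you try to prove freeness directly for $\overline X,\overline Y$, whereas it is enough to prove that the images of $x=[g_1^2,g_2^2]$ and $y=[g_1^4,g_2^2]$ generate a free group of rank two; your own free-Lie-algebra remark (or the standard fact that two non-commuting elements of a free group generate a free group of rank two) then handles the iterated commutators for free.

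The paper closes the gap by a purely combinatorial mechanism with no hyperbolic geometry. Lemma~\ref{1rel} exhibits the commutator subgroup $\Delta(r,r,r)_{(2)}$ as a one-relator group on the generators $\widetilde{c_{ij}}=[\widetilde a^{\,i},\widetilde b^{\,j}]$, $1\le i,j\le r-1$, with the single relator $\widetilde{c_{11}}\,\widetilde{c_{21}}^{-1}\widetilde{c_{22}}\,\widetilde{c_{32}}^{-1}\cdots\widetilde{c_{rr}}$ coming from rewriting $(\widetilde a\widetilde b)^r\widetilde a^{-r}\widetilde b^{-r}$ as a product of commutators. The Burau images of $[g_1^2,g_2^2]$ and $[g_1^4,g_2^2]$ are precisely $\widetilde{c_{11}}$ and $\widetilde{c_{21}}$, a \emph{proper} subset of the generators occurring in that relator (for $o(p)\ge 4$), so the Magnus Freiheitsatz immediately gives that they generate a free group. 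This both proves the freeness you left open and explains why these two particular commutators were chosen as generators. It also clarifies the role of the hypothesis $p\not\equiv 8\ (\mathrm{mod}\ 16)$: in that case $o(p)$ is a half-integer, $\Gamma_{-q_p}\cong\Delta(2,3,2o(p))$ can have trivial abelianization, and the one-relator/lower-central-series mechanism degenerates (the paper treats this case separately in Theorem~\ref{free2}), rather than any failure of ``ping--pong position.'' If you want to salvage your approach, replace the ping--pong step by this Freiheitsatz argument, or at minimum prove freeness for the images of $x,y$ rather than for $\overline X,\overline Y$.
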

%\begin{proof}
\vspace{0.2cm}\noindent 
It is well-known that the order of the matrix  
$\beta_{-q}(g_i)$, $i\in\{1,2\}$ 
in $PGL(2,\C)$ is the order of the root of unity $q$, 
namely the smallest positive $n$ such that $q$ is a primitive root of 
unity of order $n$.
 
\vspace{0.2cm}\noindent 
We considered in Lemma \ref{contain} the root of unity $q_p$ 
with the property that $\beta_{-q_p}$ is a sub-representation 
of the quantum representation $\rho_p$. 
We derive from Lemma \ref{contain} that the order  
of the root of unity $q_p$ is $2o(p)$ where 
\[ o(p)=\left\{ \begin{array}{ll}
\frac{p}{4}, & {\rm if }\: p\equiv 4({\rm mod}\: 8);\\ 
\frac{p}{8}, &{\rm if }\: p\equiv 0({\rm mod}\: 16);\\
\frac{p}{16}, &{\rm if }\: p\equiv 8({\rm mod}\: 16);\\
p,& {\rm if }\: p\equiv 1({\rm mod}\: 2), p\geq 5.\\
%5 & {\rm if }\: p=5.\\
\end{array}\right.
\]
Therefore $\beta_{-q_p}(\langle g_1^2,g_2^2\rangle)$ is isomorphic to 
the triangle group $\Delta(o(p),o(p),o(p))$. 
Notice that in general $o(p)\in \frac{1}{2}+\Z$ and 
$o(p)$ is an integer if and only if 
$p\not\equiv 8({\rm mod}\: 16)$,  as we suppose from now on. 
Observe also that the order of $\beta_{-q_p}(g_1^2)$ is a proper 
divisor of the order $p$ of a Dehn twist $\rho_p(g_1^2)$, when $p$ is even. 

\vspace{0.2cm}\noindent 
In the proof of Theorem \ref{free} we 
will need the following result concerning 
the  structure of commutator 
subgroups of triangle groups: 
\begin{lemma}\label{1rel}
The commutator subgroup  $\Delta(r,r,r)_{(2)}$ of a 
triangle group $\Delta(r,r,r)$,  $r\in \Z-\{0,1,2\}$, 
is a 1-relator group  with generators  $\widetilde{c_{ij}}$, for 
$1\leq i,j\leq r-1$, and the relation: 
\[ \widetilde{c_{11}}\cdot \widetilde{c_{21}}^{-1}\cdot \widetilde{c_{22}}\cdot \widetilde{c_{32}}^{-1}\cdots \widetilde{c_{r\,r-1}}^{-1}\cdot \widetilde{c_{rr}}=1.\]
\end{lemma}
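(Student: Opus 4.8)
The plan is to compute a presentation of $\Delta(r,r,r)_{(2)}$ via the Reidemeister–Schreier rewriting process applied to the index-$r$ normal subgroup of $\Delta(r,r,r)/\Delta(r,r,r)_{(2)}$. Recall the presentation $\Delta(r,r,r)=\langle a,b,c\mid a^r=b^r=c^r=1,\ abc=1\rangle$, which after eliminating $c=(ab)^{-1}$ becomes the two-generator, three-relator group $\langle a,b\mid a^r=b^r=(ab)^r=1\rangle$. The abelianization is $\Z/r$ (the map sends both $a$ and $b$ to a generator $t$, since $abc=1$ forces the images of $a,b,c$ to sum to zero, and $a^r=b^r=c^r=1$ is then automatic once we know the quotient is cyclic of order dividing $r$ generated by the common image — a short direct check shows it is exactly $\Z/r$). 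Thus $\Delta(r,r,r)_{(2)}$ is the kernel of $\Delta(r,r,r)\to\Z/r$, a normal subgroup of index $r$, and it is finitely presented with the number of generators and relations governed by the Schreier index formula.

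First I would fix the Schreier transversal $\{1,a,a^2,\dots,a^{r-1}\}$ for the cosets, using that $a\mapsto t$ is a generator of $\Z/r$. The Reidemeister–Schreier generators are the elements $\gamma(a^i,a)$ and $\gamma(a^i,b)$ for $0\le i\le r-1$, where $\gamma(w,x)=wx\,\overline{wx}^{\,-1}$; those coming from $a$ are all trivial because the transversal is closed under right multiplication by $a$ except for $\gamma(a^{r-1},a)=a^r=1$. So the surviving generators are $c_{ij}$ coming from $b$, naturally indexed by $1\le i,j\le r-1$ after discarding the one corresponding to the longitude $a^{i}b a^{-i-1}$ at $i=0$ or using the relation to eliminate redundancies; I would set $\widetilde{c_{ij}}$ to be the appropriately normalized images so that the indexing in the statement comes out. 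The relators of the subgroup are obtained by rewriting the $r$ conjugates $a^{i}\cdot R\cdot a^{-i}$ of each defining relator $R\in\{a^r,b^r,(ab)^r\}$: the conjugates of $a^r$ rewrite to the trivial word, and one checks that the $r$ conjugates of $b^r$ and the $r$ conjugates of $(ab)^r$ together yield relations that, after Tietze moves eliminating the generators coming from $b^r=1$ (which express some $\widetilde{c_{ij}}$ in terms of others, or are absorbed into the definition of the $\widetilde{c_{ij}}$), collapse to the single stated relation $\widetilde{c_{11}}\widetilde{c_{21}}^{-1}\widetilde{c_{22}}\widetilde{c_{32}}^{-1}\cdots\widetilde{c_{r\,r-1}}^{-1}\widetilde{c_{rr}}=1$. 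The telescoping pattern of alternating exponents is exactly what one expects from rewriting $(ab)^r$ along the cyclic transversal, where $a$ shifts the coset index and $b$ contributes a Schreier generator at each step.

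The main obstacle — and the part that requires genuine bookkeeping rather than a formula — is showing that after Reidemeister–Schreier produces roughly $3(r-1)$ relators among roughly $2(r-1)$ generators, the correct sequence of Tietze transformations reduces this to the single relation on the nose, with no hidden extra relators surviving. Concretely, the relators from $b^r=1$ and its conjugates must be shown to be precisely what is needed to eliminate the "extra" generators and bring the generating set down to $\{\widetilde{c_{ij}}\}_{1\le i,j\le r-1}$, and one must verify that once this is done the $r$ relators coming from $(ab)^r=1$ are all cyclic permutations (hence consequences) of the one displayed relation. I would handle this by writing out the rewriting for a generic conjugate explicitly once, tracking the coset index arithmetic mod $r$, and then invoking the Euler-characteristic count ($1-2(r-1)+?$ must match the deficiency of a one-relator group on $(r-1)^2$ generators, i.e. $1-(r-1)^2+1$) as a consistency check that no relator has been lost or double-counted. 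The hypothesis $r\notin\{0,1,2\}$ is used exactly to guarantee the transversal has length $r\ge 3$ so that the telescoping relation is non-degenerate and the abelianization is genuinely $\Z/r$.
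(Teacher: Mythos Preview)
Your computation of the abelianization is wrong, and this derails the entire argument. From $abc=1$ you correctly get $\bar a+\bar b+\bar c=0$ in $H_1(\Delta(r,r,r))$, but nothing forces $\bar a=\bar b$; eliminating $\bar c=-\bar a-\bar b$ and imposing $r\bar a=r\bar b=0$ gives $H_1(\Delta(r,r,r))\cong\Z/r\Z\times\Z/r\Z$, not $\Z/r\Z$. Hence $\Delta(r,r,r)_{(2)}$ has index $r^2$, not $r$, and your Reidemeister--Schreier computation with the length-$r$ transversal $\{1,a,\dots,a^{r-1}\}$ presents the wrong subgroup. This is also why your generator count cannot be made to work: with two ambient generators and an index-$r$ transversal, Reidemeister--Schreier yields on the order of $r$ Schreier generators, not the $(r-1)^2$ elements $\widetilde{c_{ij}}$ of the statement. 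Your phrase ``naturally indexed by $1\le i,j\le r-1$ after discarding\dots'' papers over a genuine numerical mismatch, and the Euler-characteristic ``consistency check'' at the end compares quantities that do not correspond to the subgroup you are actually presenting.

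For contrast, the paper's argument bypasses Reidemeister--Schreier. It realizes $\Delta(r,r,r)$ as the quotient of $\Z/r\Z*\Z/r\Z$ by the normal closure of the single element $(\widetilde a\widetilde b)^r\widetilde a^{-r}\widetilde b^{-r}$. The kernel $K$ of the abelianization $\Z/r\Z*\Z/r\Z\to\Z/r\Z\times\Z/r\Z$ is free on the $(r-1)^2$ commutators $\widetilde{c_{ij}}=[\widetilde a^i,\widetilde b^j]$, $1\le i,j\le r-1$; since the extra relator already lies in $K$, the commutator subgroup of $\Delta(r,r,r)$ is $K$ modulo that single relator, and the displayed relation is simply $(\widetilde a\widetilde b)^r\widetilde a^{-r}\widetilde b^{-r}$ rewritten as a product of the $\widetilde{c_{ij}}$. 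If you want to salvage a rewriting approach, you would need a transversal of size $r^2$ for the correct index-$r^2$ subgroup, and then to track $3r^2$ rewritten relators rather than $3r$.
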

\begin{proof}
The  kernel $K$ of the abelianization homomorphism 
$\Z/r\Z*\Z/r\Z\to \Z/r\Z\times\Z/r\Z$  
is the free group generated by the commutators. 
Denote by $\widetilde{a}$ and $\widetilde{b}$ the generators 
of the two copies of the cyclic group $\Z/r\Z$. 
Then $K$ is freely generated by 
$\widetilde{c_{ij}}=[\widetilde{a}^i,\widetilde{b}^j]$, where 
$1\leq i,j\leq r-1$.   
The group $\Delta(r,r,r)$ is the quotient of 
$\Z/r\Z*\Z/r\Z$ by the normal subgroup generated by the element 
$(\widetilde{a}\widetilde{b})^r\,\widetilde{a}^{-r}\widetilde{b}^{-r}$, 
which belongs to $K$. 
This shows that $\Delta(r,r,r)_{(2)}$ is a 
1-relator group, namely the quotient of $K$ by 
the normal subgroup generated by the element 
$(\widetilde{a}\widetilde{b})^r\,\widetilde{a}^{-r}\widetilde{b}^{-r}$. 
In order to get the explicit form of the relation we have to express this 
element as a product of the generators of $K$, i.e., as 
a product of commutators of the form $[\widetilde{a}^i,\widetilde{b}^j]$. 
This can be done as follows:  
\[(\widetilde{a}\widetilde{b})^r\,\widetilde{a}^{-r}\widetilde{b}^{-r}= 
[\widetilde{a},\widetilde{b}][\widetilde{b},\widetilde{a}^2]
[\widetilde{a}^2,\widetilde{b}^2]\cdots 
[\widetilde{a}^{r-1},\widetilde{b}^{r-1}]
[\widetilde{b}^{r-1},\widetilde{a}^{r}] 
[\widetilde{a}^{r},\widetilde{b}^{r}].\]
Therefore $\Delta(r,r,r)_{(2)}$ has a presentation with generators 
$\widetilde{c_{ij}}$, where $1\leq i\leq j\leq r$, and 
the relation in the statement of the lemma.  
\end{proof}

\vspace{0.2cm}\noindent {\em Proof of Theorem \ref{free}.}
Recall now the classical Magnus Freiheitsatz, which 
states that any subgroup of a 
1-relator group which is generated by a 
proper subset of the set of generators involved in the  
cyclically reduced word relator  
is free. 

\vspace{0.2cm}\noindent
Assume now that $o(p)\in\Z$ and $o(p)\geq 4$. 
Then $\beta_{-q_p}([g_1^2,g_2^2])$ and $\beta_{-q_p}([g_1^4,g_2^2])$ are 
the elements $\widetilde{c_{11}}$ and  $\widetilde{c_{21}}$  
of $\Delta(o(p),o(p),o(p))_{(2)}$ respectively. 

\vspace{0.2cm}\noindent
An easy application of the Freiheitsatz to the commutator subgroup 
of the infinite triangle group $\Delta(o(p),o(p),o(p))$ 
gives us that the subgroup generated by 
$\beta_{-q_p}([g_1^2,g_2^2])$ and  $\beta_{-q_p}([g_1^4,g_2^2])$ is free. 
This implies that the subgroup generated by $x$ and $y$ is free.

\vspace{0.2cm}\noindent
Eventually the $k$-th term of the lower central series 
of the group generated by $x$ and $y$ is also a free 
subgroup which is contained into $\rho_p((PB_3)_{(k)})\subset \rho_p(I_g(k))$. 
This proves Theorem \ref{free}. 

\vspace{0.2cm}\noindent
When $p\equiv 8({\rm mod}\: 16)$, $o(p)$ is a half-integer and 
$\beta_{-q_p}(\langle g_1^2,g_2^2\rangle)$ is isomorphic to the triangle 
group $\Delta(2,3, 2o(p))$. If ${\rm gcd}(3,2o(p))=1$, then 
$H_1(\Delta(2,3,2o(p)))=0$, so the central series of this triangle 
group is trivial. Nevertheless the group $\Delta(2,3, 2o(p))$ has 
many normal subgroups of finite index which are surface groups and 
thus contain free subgroups. In particular, any subgroup of infinite index 
of $\Delta(2,3,2o(p))$ is free. 
There is then an extension of the previous result in this case, as 
follows:   

\begin{theorem}\label{free2}
Assume that $g\geq 4$, $p\not\in\{8,24,40\}$ 
and $p\equiv 8({\rm mod}\: 16)$ so that $p=8n$, for odd  
$n=2k+1\geq 7$.    
Consider the following two elements of $\langle g_1^2,g_2^2\rangle$:
\[ s=g_1^{2k}g_2^{2k}g_1^{2(k-k^2)}g_2^{-2}g_1^{2k}g_2^{-2}g_1^{2(k-k^2)}
g_2^{2k}g_1^{2k},\]
 and 
\[ t=
g_1^{2k}g_2^{2k}g_1^{2(k-k^2)}g_2^{-2}g_1^{2(k+1)}g_2^{2}g_1^{2(k+k^2)}
g_2^{2k}g_1^{10k}.
\] 
Let $N(s,t)$ be the normal subgroup generated by $s$ and $t$ 
in  $\langle g_1^2,g_2^2\rangle$. 
Then for any choice of $f(n)$ elements $x_1,x_2,\ldots,x_{f(n)}$
from $N(s,t)$ the  image $\rho_p(\langle x_1,x_2,\ldots,x_{f(n)}\rangle)$ 
is a free group. Here the function $f(n)$ is given by:  
\[ f(n)=|PSL(2,\Z/n\Z)|\cdot \frac{n-6}{6n}\]
and, in particular, when $n$ is prime by: 
\[f(n)=\frac{(n+1)(n-1)(n-6)}{12} \cdot\]
Then the group generated by the iterated commutators 
of length $k\geq 3$ is a free subgroup of 
$\rho_p(I_g(k))$. 
\end{theorem}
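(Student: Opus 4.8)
The plan is to run the whole argument inside $\Gamma:=\beta_{-q_p}\bigl(\langle g_1^2,g_2^2\rangle\bigr)$ and then transport it to $\rho_p$ via Lemma~\ref{contain}, which provides a surjection $\rho_p\bigl(\langle g_1^2,g_2^2\rangle\bigr)\twoheadrightarrow\Gamma$. Since $p=8n$ with $n=2k+1$ odd and $p\notin\{8,24,40\}$ we have $n\geq 7$, and by Lemma~\ref{contain} the order of $q_p$ equals $2o(p)=\tfrac p8=n$; hence Proposition~\ref{odd} identifies $\Gamma$ with the triangle group $\Delta(2,3,n)$. I would then isolate the relevant surface subgroup: writing $\Delta(2,3,n)=\langle u,v\mid u^3=v^2=(uv)^n=1\rangle=PSL(2,\Z)/\langle\!\langle(uv)^n\rangle\!\rangle$ and using that $uv$ is parabolic in $PSL(2,\Z)$ (so $(uv)^n\equiv 1\pmod n$), the reduction $PSL(2,\Z)\to PSL(2,\Z/n\Z)$ descends to a surjection $\pi\colon\Delta(2,3,n)\twoheadrightarrow PSL(2,\Z/n\Z)$ whose kernel $K_n$ is the image of the torsion-free group $\Gamma(n)$; passing to $\Delta(2,3,n)$ amounts to filling in the cusps of $\Gamma(n)\backslash\H$, so $K_n=\pi_1(X(n))$ is the fundamental group of the closed modular curve of level $n$, i.e.\ a closed orientable surface group. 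By multiplicativity of the Euler characteristic,
\[
-\chi(K_n)=[\Delta(2,3,n):K_n]\cdot\Bigl(1-\tfrac12-\tfrac13-\tfrac1n\Bigr)=|PSL(2,\Z/n\Z)|\cdot\frac{n-6}{6n}=f(n),
\]
which is the stated value (equal to $\tfrac{(n+1)(n-1)(n-6)}{12}$ for prime $n$, since then $|PSL(2,\Z/n\Z)|=\tfrac{n(n^2-1)}{2}$). Thus $K_n=\pi_1(\Sigma_{g(n)})$ with $g(n)=1+\tfrac12 f(n)$, so $b_1(K_n)=2g(n)=f(n)+2$.

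The heart of the proof is the identity $\beta_{-q_p}(N(s,t))=K_n$. To prove it I would rewrite $s$ and $t$ as words in $A=\beta_{-q_p}(g_1^2)$ and $B=\beta_{-q_p}(g_2^2)$, which satisfy $A^n=B^n=1$, and argue with the presentation of $\Delta(2,3,n)$ on $A,B$ from Proposition~\ref{odd}, the substitutions $\alpha=A^{k+1}$, $u=A^{-1}B^kA^k$, $v=A^kB^kA^k$ of Lemma~\ref{oddlem}, and the congruences $2k\equiv-1$ and $k-k^2\equiv-3k^2\pmod n$. Two things must be checked: that $\pi(\bar s)=\pi(\bar t)=1$, so that $\bar s,\bar t\in K_n$; and that adjoining $\bar s$ and $\bar t$ as relators to the triangle presentation of $\Delta(2,3,n)$ produces a presentation of $PSL(2,\Z/n\Z)$, i.e.\ that $\bar s$ and $\bar t$ normally generate $K_n$. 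This explicit relator computation — showing that these two particular words cut $\Delta(2,3,n)$ down to exactly the level-$n$ congruence quotient — is the step I expect to be the main obstacle; the exponents appearing in $s$ and $t$ were designed precisely so that it works, but it is the only genuinely non-formal ingredient.

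Granting $\beta_{-q_p}(N(s,t))=K_n$, fix any $x_1,\dots,x_{f(n)}\in N(s,t)$ and set $G_\beta:=\beta_{-q_p}\bigl(\langle x_1,\dots,x_{f(n)}\rangle\bigr)\leq K_n$. A finite-index subgroup of the surface group $K_n$ is again a closed surface group, of genus $\geq g(n)$, hence has first Betti number $\geq 2g(n)=f(n)+2$; since $G_\beta$ is generated by only $f(n)$ elements it must have infinite index in $K_n$, and an infinite-index subgroup of a closed surface group is free, so $G_\beta$ is free. By Lemma~\ref{contain}, $\rho_p\bigl(\langle x_1,\dots,x_{f(n)}\rangle\bigr)$ is generated by $f(n)$ elements and surjects onto $G_\beta$; exactly as in the proof of Theorem~\ref{free}, where the analogous remark upgrades a surjection onto a free group to an isomorphism, Hopficity of free groups forces this surjection to be an isomorphism once the $\beta_{-q_p}$-images of the $x_i$ freely generate $G_\beta$, so $\rho_p\bigl(\langle x_1,\dots,x_{f(n)}\rangle\bigr)$ is free. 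Finally, since $N(s,t)\subseteq\langle g_1^2,g_2^2\rangle\subset PB_3$, any iterated commutator of weight $k$ in the $x_i$ lies in $(PB_3)_{(k)}$, so Proposition~\ref{cseries} (applied as in Proposition~\ref{Johnsonfree}) puts the $k$-th term of the lower central series of the free group $\rho_p\bigl(\langle x_1,\dots,x_{f(n)}\rangle\bigr)$ — again free, as a subgroup of a free group — inside $\rho_p\bigl((PB_3)_{(k)}\bigr)\subseteq\rho_p(I_g(k))$, which yields the last assertion. The excluded values $p\in\{8,24,40\}$ are exactly those with $n\in\{1,3,5\}$, where $\Delta(2,3,n)$ is a finite spherical triangle group and $f(n)\leq 0$, so the construction is vacuous there.
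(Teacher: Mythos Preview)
Your outline is essentially the paper's proof. The paper resolves the step you flag as the main obstacle by quoting an explicit presentation of $PSL(2,\Z/n\Z)$ due to Coste--Gannon and Hurrelbrink,
\[
PSL(2,\Z/n\Z)=\langle \alpha,v,u\mid \alpha^n=u^3=v^2=\alpha uv=1,\ gvgv=1,\ g\alpha g^{-1}\alpha^{-4}=1\rangle,\qquad g=v\alpha^k v\alpha^{-2}v\alpha^k,
\]
so that $K_n$ is normally generated in $\Delta(2,3,n)$ by the two words $gvgv$ and $g\alpha g^{-1}\alpha^{-4}$; using the substitutions $\alpha=A^{k+1}$, $v=A^kB^kA^k$, $v\alpha^2v=B$ from Lemma~\ref{oddlem} one checks that these are the $\beta_{-q_p}$-images of $s$ and $t$. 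Your modular-curve argument that $K_n=\pi_1(X(n))$ is a clean alternative to the paper's direct torsion check (which observes that torsion in $\Delta(2,3,n)$ is conjugate into $\langle\alpha\rangle$, $\langle u\rangle$ or $\langle v\rangle$, none of which meet $\ker\psi$ nontrivially).

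One genuine soft spot in your lift to $\rho_p$: Hopficity gives an isomorphism $G_\rho\cong G_\beta$ only when the $\beta_{-q_p}(x_i)$ \emph{freely} generate $G_\beta$, but your infinite-index argument shows only that $G_\beta$ is free of some rank $r\le f(n)$, possibly strictly smaller, and a surjection from the $f(n)$-generated group $G_\rho$ onto $F_r$ does not by itself force $G_\rho$ to be free. The paper does not address this either; it establishes freeness only at the Burau level and then states the conclusion for $\rho_p$, so this is a shared informality rather than a divergence from the paper.
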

\begin{proof}
Observe that the map $PSL(2,\Z)\to PSL(2,\Z/n\Z)$ factors through 
$\Delta(2,3,n)$, namely we have a homomorphism
$\psi: \Delta(2,3,n)\to PSL(2,\Z/n\Z)$ defined by 
\[ \psi(\alpha)=\left(\begin{array}{cc}
1 & -1 \\
0 & 1\\
\end{array}\right), \: \psi(u)=\left(\begin{array}{cc}
1 & -1 \\
1 & 0\\
\end{array}\right), \:\psi(v)=\left(\begin{array}{cc}
0 & -1 \\
1 & 0\\
\end{array}\right).  
\]
The matrices $\psi(\alpha),\psi(u),\psi(v)$ are obviously 
elements of orders $n,3$ and $2$ in $PSL(2,\Z/n\Z)$ respectively. 
It follows that the normal subgroup $K(2,3,n)=\ker \psi$ is 
torsion free, because every torsion element in $\Delta(2,3,n)$ is  
conjugate to some power of  one the generators $\alpha,u,v$ 
(see \cite{HKS}). Therefore $K(2,3,n)$ is a 
surface group, namely the fundamental 
group of a closed orientable surface which 
finitely covers  the fundamental domain of $\Delta(2,3,n)$. 
The Euler characteristic $\chi(K(2,3,n))$ of this Fuchsian group 
can easily be computed by means of the formula: 
\[ \chi(K(2,3,n))=|PSL(2,\Z/n\Z)|\cdot \chi(\Delta(2,3,n)),\]
where the (orbifold) Euler characteristic  $\chi(\Delta(2,3,n))$ 
has the well-known expression:
\[ -\chi(\Delta(2,3,n))=1-
\left(\frac{1}{2}+\frac{1}{3}+\frac{1}{n}\right)=\frac{n-6}{6n}.\] 

\vspace{0.2cm}\noindent 
It is also known that any $-\chi(G)+1$ elements 
of a closed orientable surface group $G$ generate a free 
subgroup of $G$. Thus, in order to establish Theorem \ref{free2}, 
it will suffice to show that  the images of the elements $s,t$ under  
Burau's representation $\beta_{-q_p}$ are normal generators 
of the group $K(2,3,n)$. This is equivalent to show that 
these images correspond to the relations needed to impose in 
$\Delta(2,3,n)$ in order to obtain the quotient $PSL(2,\Z/n\Z)$. 
However, one already  knows presentations for this group (see 
\cite{CG}, Lemma 1 and \cite{Hu}) as follows: 
\[ PSL(2,\Z/n\Z)=\langle \alpha,v,u\: |\: \: \alpha^n=u^3=v^2=1, 
gvgv=g\alpha g^{-1}\alpha^{-4}=1\rangle, \]
for odd $n$, where $g=v\alpha^kv\alpha^{-2}v\alpha^k$.
The first three relations above correspond to the presentation 
of $\Delta(2,3,n)$ and the elements 
$gvgv$ and $g\alpha g\alpha^{-4}$ correspond to the images 
of $s$ and $t$ in $\Delta(2,3,n)$, by using the fact   
that 
$\alpha=a^{k+1}$, $v\alpha^2v=b$, $v=a^kb^ka^k$ (see the proof of Lemma \ref{oddlem}). 
\end{proof}

\subsubsection{Second proof of Proposition \ref{Johnsonfree}}\label{shorter}  
We outline here an alternative proof which does not rely 
on the description of the image of Burau's representation 
in Corollary \ref{inftriang}.  This proof is shorter 
but less effective since it does not produce explicit free subgroups
and uses the result of \cite{LW} and the Tits alternative, which need 
more sophisticated tools from the theory of algebraic groups. 

\vspace{0.2cm}\noindent
The image $\rho_p(M_g)$ in $PU(N(p,g))$ is  dense in 
$PSU(N(p,g))$, if $p\geq 5$ is prime (see \cite{LW}), where 
$N(p,g)$ denotes the dimension of the  space of conformal blocks in genus 
$g$ for the TQFT ${\mathcal V}_p$. 
In particular, the image  of $\rho_p$ is Zariski dense in $PU(N(p,g))$. 
By the Tits alternative (see \cite{Tits}) the image is either solvable or 
else it contains a free non-abelian subgroup. However, if the image were 
solvable, then its Zariski closure would be a solvable Lie group, 
which is a contradiction. 
This implies that  $\rho_p(M_g)$ contains a 
free non-abelian subgroup. 

\vspace{0.2cm}\noindent
If $p$  is not prime but has a prime factor $r\geq 5$, then the claim 
for $p$ follows from that for $r$.  
If $p$ does not satisfy this condition, then we have again to use the 
result of Proposition \ref{jolarge} for $k=1$. 
This result can be obtained directly 
from the computations in \cite{Jones} proving that the 
image of the Jones representation of $B_3$ is neither finite nor abelian 
for the considered values of $p$. 
This settles the case $k=1$ of Proposition \ref{Johnsonfree}.  

\vspace{0.2cm}\noindent 
Furthermore, the group $\rho_p(T_g)$ is of finite index in $\rho_p(M_g)$, 
and hence it also contains a 
free non-abelian subgroup. Results of Morita (see \cite{Mor2}) 
show that for $k\geq 2$ 
the group $I_g(k+1)$ is the kernel of the $k$-th 
Johnson homomorphism  $I_g(k)\to A_k$, where 
$A_k$ is a finitely generated abelian group. 
This implies that $[I_g(k), I_g(k)]\subset I_g(k+1)$, for every $k\geq 2$.   
In particular, the $k$-th term of the derived series of 
$\rho_p(T_g)$ is contained into $\rho_p(I_g(k+1))$. 
But every term of the derived series of $\rho_p(T_g)$  contains the 
corresponding term of the derived series of a free subgroup 
and hence a free non-abelian group. 
This proves Proposition \ref{Johnsonfree}.

\begin{remark}
Using the strong version of Tits' theorem due to 
Breuillard and Gelander (see \cite{BG}) 
there exists a free non-abelian subgroup  of $M_g/M_g[p]$ whose 
image in  $PSU(N(p,g))$ is dense. Here $M_g[p]$ denotes the 
(normal) subgroup generated by the $p$-th powers of Dehn twists.  
\end{remark}

\subsection{Proof of Proposition \ref{excases}}\label{exc}
If the genus $g\in \{2,3\}$, then the construction used in the proof of 
Proposition \ref{Johnsonfree} should be modified. This is equally valid 
when we want to get rid of the values $p=5$ and $p=40$. 

\vspace{0.2cm}\noindent
The proof follows along the same lines as Proposition \ref{jolarge}, 
but the embeddings $\Sigma_{0,4}\subset \Sigma_g$ are now different. 
In all cases  considered below the analogue of Proposition 
\ref{cseries} will still be true, namely the image of the subgroup 
$\langle g_1^2,g_2^2\rangle_{(k)}$ by the homomorphisms 
$M_{0,4}\to M_g$ will be contained within the Johnson subgroup $I_g(k)$. 

\vspace{0.2cm}\noindent
If $g=2$ we use the following embedding 
$\Sigma_{0,4}\subset \Sigma_2$:

\begin{center}
\includegraphics[scale=0.4]{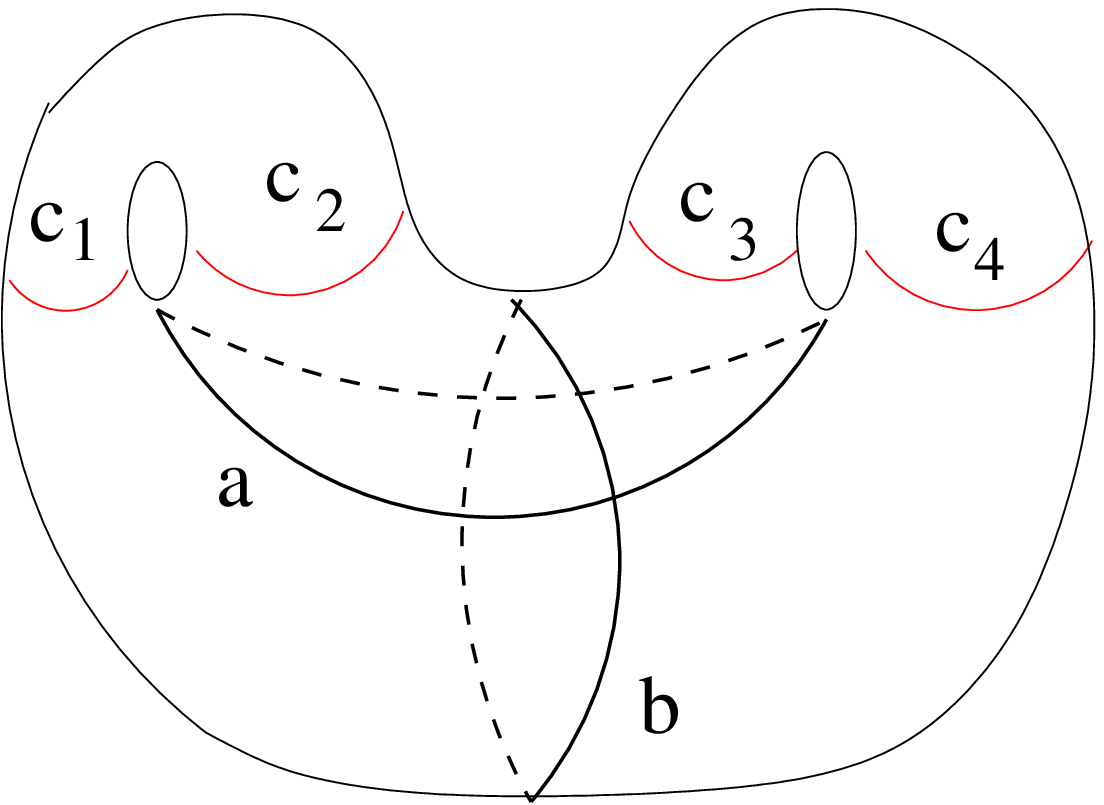}
\end{center}

\vspace{0.2cm}\noindent
Although the homomorphism $M_{0,4}\to M_2$  induced by this 
embedding is not anymore injective, it sends  
the free subgroup $\langle g_1^2,g_2^2\rangle\subset PB_3\subset M_{0,4}$ 
isomorphically 
onto the subgroup of $M_2$ generated by the Dehn twists 
along the curves $a$ and $b$  in the figure above. 

\vspace{0.2cm}\noindent
Consider  for even $p$ the space of conformal blocks 
associated to $\Sigma_{0,4}$ with boundary 
labels $(1,1,1,1)$. This 2-dimensional subspace is 
$\rho_p(\langle g_1^2,g_2^2\rangle)$-invariant 
and the restriction of $\rho_p$ to this subspace is still equivalent 
to Burau's representation $\beta_{-q_p}$ (see \cite{F}). 
Therefore Proposition \ref{jolarge} shows that 
$\rho_p(\langle g_1^2,g_2^2\rangle_{(k)})$,  and 
hence also $\rho_p(I_2(k))$, contains a free non-abelian group.

\vspace{0.2cm}\noindent
If $g=3$ and $p\geq 7$ is odd, then 
we consider the following embedding 
of $\Sigma_{0,4}\subset M_3$: 

\begin{center}
\includegraphics[scale=0.4]{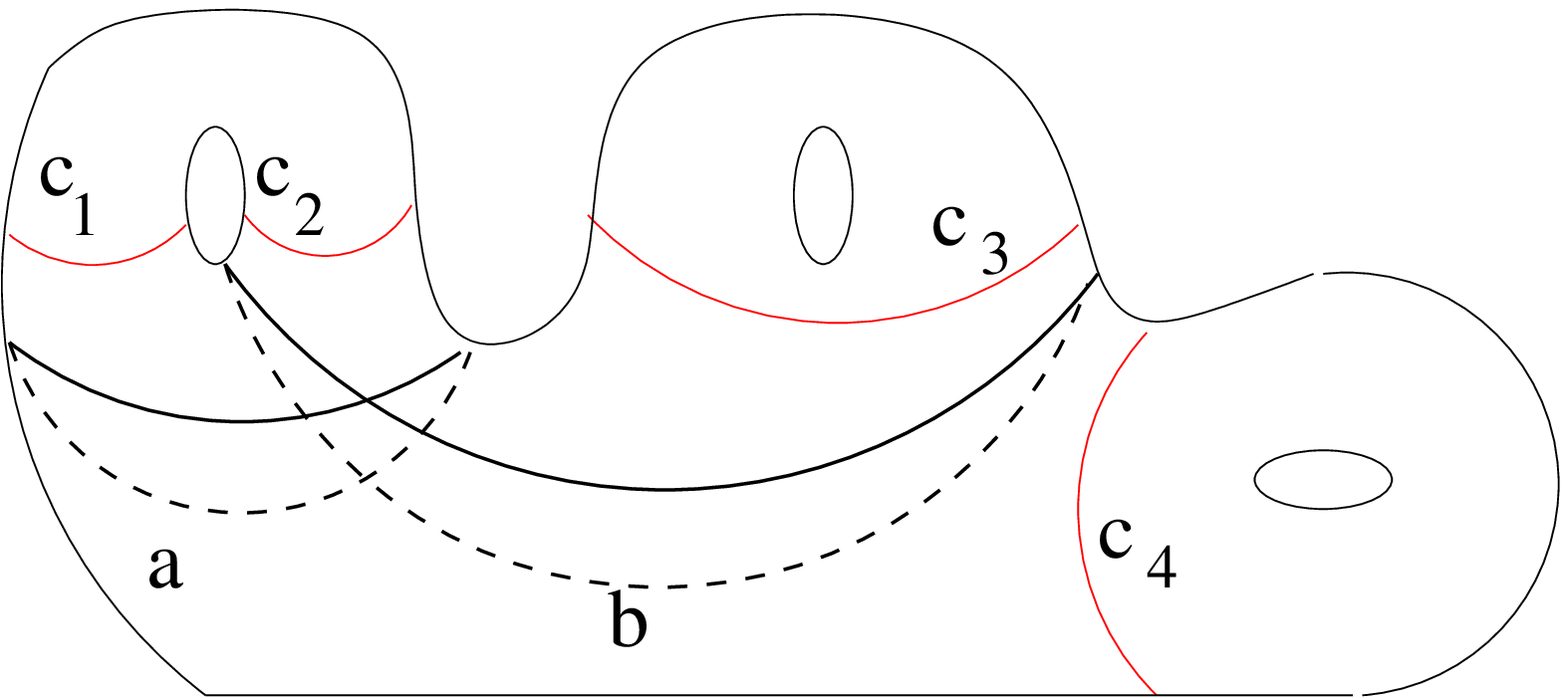}
\end{center}  

\vspace{0.2cm}\noindent
The homomorphism $M_{0,4}\to M_3$  induced by this 
embedding is not injective but it also sends  
the free subgroup $\langle g_1^2,g_2^2\rangle\subset PB_3\subset M_{0,4}$ 
isomorphically 
onto the subgroup of $M_3$ generated by the Dehn twists 
along the curves $a$ and $b$  in the figure above. 
The space of conformal blocks 
associated to $\Sigma_{0,4}$ with boundary 
labels $(2,2,2,4)$ is a 2-dimensional subspace invariant by 
$\rho_p(\langle g_1^2,g_2^2\rangle)$ 
and the restriction of $\rho_p$ to this subspace is equivalent 
to Burau's representation $\beta_{-q_p}$. 
Applying again Proposition \ref{jolarge}, we find that 
$\rho_p(\langle g_1^2,g_2^2\rangle_{(k)})$,  and 
hence $\rho_p(I_3(k))$, contains a free non-abelian group. 
This also gives the desired results for any $g\geq 3$, and 
$p$ as in the statement. 

\vspace{0.2cm}\noindent
Eventually we have to settle the case $p=40$, when 
$\beta_{-q_p}(B_3)$ is known to have finite image 
(see \cite{Jones}). We will consider instead the 
representation $\rho_p(i(PB_4))$, where 
$PB_4$ embeds non-canonically into $M_{0,5}$ and 
$M_{0,5}$ maps into $M_3$ 
by the homomorphism $i:M_{0,5}\to M_3$ induced by 
the inclusion  $\Sigma_{0,5}\subset \Sigma_g$ drawn below: 

\begin{center}
\includegraphics[scale=0.4]{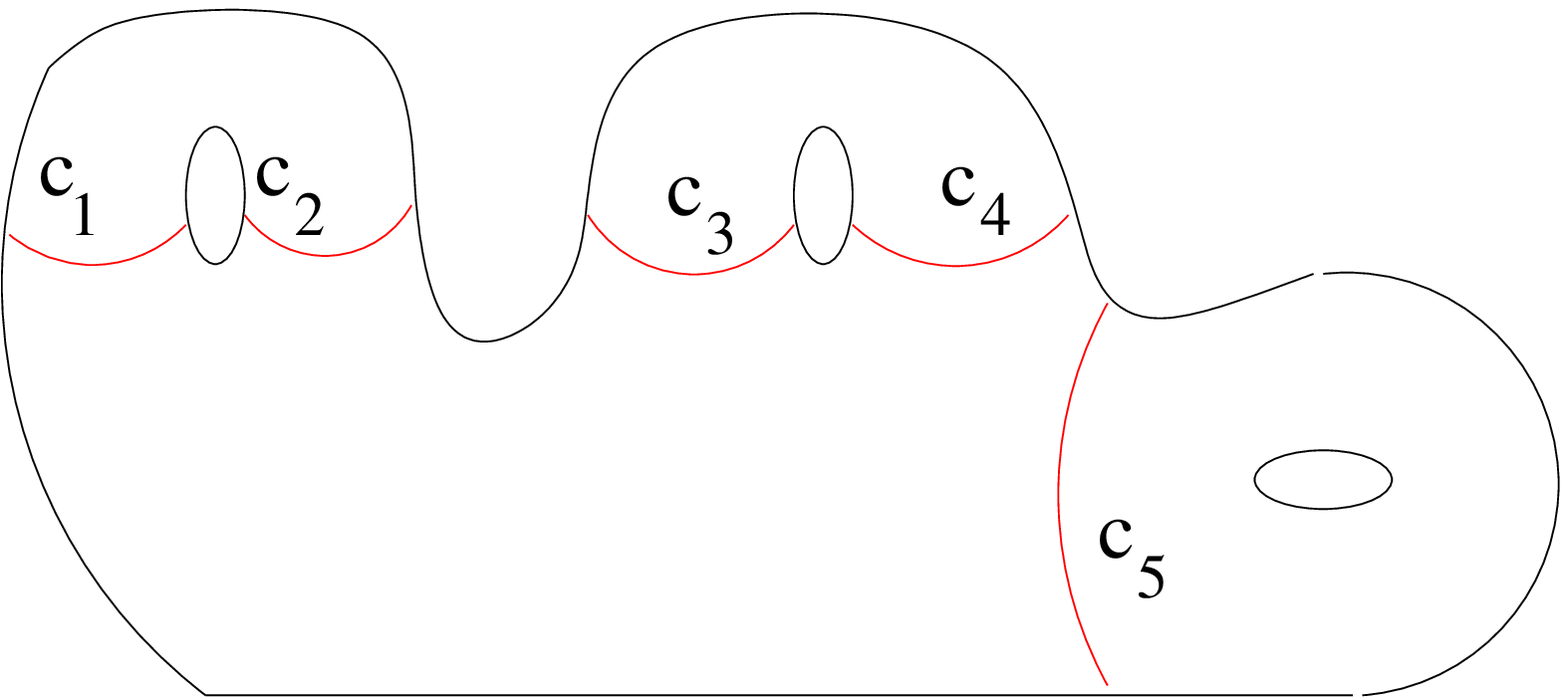}
\end{center}  

\vspace{0.2cm}\noindent
We consider the 3-dimensional space of conformal blocks associated 
to the surface $\Sigma_{0,5}$ with the boundary labels 
$(1,1,1,1,2)$, when $p=40$ and the labels 
$(2,2,2,2,2)$, when $p=5$ respectively. This space of conformal blocks is  
$\rho_p(i(PB_4))$-invariant. The restriction of 
${\rho_p}|_{PB_4}$ to this invariant subspace is known 
(see again \cite{F}) to be equivalent to the Jones 
representation of $B_4$ at the corresponding root of unity.   

\vspace{0.2cm}\noindent
Now we have to use a result of Freedman, Larsen and Wang 
(see\cite{FLW}) subsequently reproved and extended by Kuperberg in 
(\cite{Ku}, Thm.1) saying that the Jones representation of $B_4$  
at a $10$-th root of unity on 
the two 3-dimensional conformal blocks we chose  
is Zariski dense in the group $SL(3,\C)$. 
A particular case of the  Tits alternative 
says that any finitely generated subgroup 
of $SL(3,\C)$  is either solvable or else contains a free non-abelian group. 
A solvable subgroup  has also a solvable Zariski closure. 
The denseness result from above implies then 
that ${\rho_p}(PB_4)$, and hence also $\rho_p((PB_4)_{(k)})$, 
contains a free non-abelian group. The arguments in the proof of 
Proposition \ref{jolarge} carry on to this setting and this 
proves Proposition  \ref{excases}.

\begin{corollary}
For any $k$  the quotient group $I_g(k)/M_g[p]\cap I_g(k)$, 
and in particular,  ${K}_g/{K}_g[p]$, for $g\geq 3$ 
and $p\not\in \{3, 4, 8, 12, 16, 24\}$   
contains a free non-abelian subgroup. Here ${K}_g[p]$ is the 
normal subgroup of $K_g$ generated by the $p$-th powers of the Dehn twist along 
separating simple closed curves.  
\end{corollary}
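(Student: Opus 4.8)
The plan is to deduce this corollary directly from Theorem \ref{Johnson} together with Proposition \ref{excases}, using only elementary group theory. Recall that $M_g[p]$ denotes the normal subgroup of $M_g$ generated by the $p$-th powers of Dehn twists, and that by Remark \ref{order} the image of every Dehn twist under $\rho_p$ has order exactly $p$; consequently $\rho_p(M_g[p])$ is trivial, i.e. $M_g[p]\subset\ker\rho_p$. More precisely, $M_g[p]\cap I_g(k)\subset\ker\bigl(\rho_p|_{I_g(k)}\bigr)$, so the projective representation $\rho_p$ factors through a homomorphism $\overline{\rho_p}:I_g(k)/(M_g[p]\cap I_g(k))\to PU(N(p,g))$ whose image is exactly $\rho_p(I_g(k))$.

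The first step is therefore to observe that a group admitting a homomorphism onto (or merely into, with the relevant subgroup in the image) a group containing a free non-abelian subgroup must itself contain a free non-abelian subgroup: if $\overline{\rho_p}$ has image containing a free group $F=\langle X,Y\rangle$, then choosing preimages $\widetilde X,\widetilde Y$ in the quotient $I_g(k)/(M_g[p]\cap I_g(k))$ and noting that any relation among $\widetilde X,\widetilde Y$ would descend to a relation among $X,Y$, we conclude $\langle\widetilde X,\widetilde Y\rangle$ is itself free of rank $2$. Applying this to the free non-abelian subgroup of $\rho_p(I_g(k))$ furnished by Theorem \ref{Johnson} (valid for $g\geq 3$ and $p\not\in\{3,4,8,12,16,24\}$), we obtain a free non-abelian subgroup of $I_g(k)/(M_g[p]\cap I_g(k))$, as claimed.

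For the special case $k=2$, recall that $I_g(2)=K_g$ is the subgroup generated by Dehn twists along separating simple closed curves, and that $K_g[p]$ is by definition the normal subgroup of $K_g$ generated by the $p$-th powers of such twists. Since these generators all lie in $M_g[p]$, we have $K_g[p]\subset M_g[p]\cap K_g$; conversely the reverse inclusion is not needed. It suffices to observe that the quotient map $K_g/K_g[p]\to K_g/(M_g[p]\cap K_g)$ is surjective, so by the same pull-back-of-generators argument, the free non-abelian subgroup already produced in $K_g/(M_g[p]\cap K_g)=I_g(2)/(M_g[p]\cap I_g(2))$ lifts to one in $K_g/K_g[p]$. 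This handles the "in particular" clause of the statement.

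There is essentially no serious obstacle here: the content has all been done in Theorem \ref{Johnson} and Proposition \ref{excases}, and the remaining work is the routine observation that freeness is preserved under passing to preimages along a group homomorphism, together with the triviality $\rho_p(M_g[p])=1$ coming from Remark \ref{order}. The only point requiring a modicum of care is making sure that one lifts the \emph{generators} of the free subgroup rather than attempting to lift the subgroup as a whole — the lifted subgroup is free precisely because the surjection $\langle\widetilde X,\widetilde Y\rangle\twoheadrightarrow\langle X,Y\rangle\cong\mathbb F_2$ from a two-generated group onto a free group of rank two must be an isomorphism, a free group of rank $2$ being Hopfian (indeed, a two-generator group surjecting onto $\mathbb F_2$ cannot do so non-injectively). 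Collecting these pieces proves the corollary for all $k$ in the stated range of $(g,p)$.
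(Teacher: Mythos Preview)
Your proof is correct and is exactly the argument the paper intends: the corollary is stated without proof precisely because it follows immediately from Theorem \ref{Johnson} via the factorization of $\rho_p$ through $M_g/M_g[p]$ (using Remark \ref{order}) and the elementary lifting of free generators that you spell out. The mention of Proposition \ref{excases} is unnecessary here, since the hypotheses $g\geq 3$, $p\not\in\{3,4,8,12,16,24\}$ are already covered by Theorem \ref{Johnson} directly.
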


{
\small      
      
\bibliographystyle{plain}

}

\end{document}